\newtheorem{theorem}{Theorem}
\newtheorem{corollary}[theorem]{Corollary}
\newtheorem{lemma}[theorem]{Lemma}
\newtheorem{proposition}[theorem]{Proposition}
\theoremstyle{definition}
\newtheorem{example}[theorem]{Example}
\newtheorem{remark}[theorem]{Remark}
\DeclareMathOperator{\im}{im}
\DeclareMathOperator{\lcm}{lcm}
\DeclareMathOperator{\rank}{rank}
\renewcommand{\labelenumi}{\upshape(\roman{enumi})}
\renewcommand{\theenumi}\labelenumi
\title{Classification of quadratic forms over finite fields with maximal and minimal Artin-Schreier curves}
\author{Ruikai Chen}
\date{\small School of Mathematical Sciences, South China Normal University, Guangzhou 510631, China\\Email: \href{mailto:chen.rk@outlook.com}{chen.rk@outlook.com}}
\begin{document}

\maketitle

\noindent\textbf{Abstract.} This paper explores quadratic forms over finite fields with associated Artin-Schreier curves. Specifically, we investigate quadratic forms of $\mathbb F_{q^n}/\mathbb F_q$ represented by polynomials over $\mathbb F_{q^n}$ with $q$ odd, characterizing them using certain matrices defined by coefficients of the polynomials. In particular, a comprehensive treatment will be given for those polynomials whose coefficients all lie in $\mathbb F_q$. Afterwards, the results on quadratic forms will be applied to get maximal and minimal Artin-Schreier curves explicitly.\\
\textbf{Keywords.} Artin-Schreier curve, finite field, polynomial, quadratic form\\

\section{Introduction}

Given $\mathbb F_{q^n}/\mathbb F_q$, an extension of a finite field of odd characteristic, and the polynomial ring $\mathbb F_{q^n}[x]$, define
\[\mathrm{Tr}(x)=\sum_{i=0}^{n-1}x^{q^i}.\]
Let $\ell$ be a nonzero $q$-linear polynomial over $\mathbb F_{q^n}$ of the form
\[\ell(x)=\sum_{i=0}^{n-1}a_ix^{q^i}.\]
Here a $q$-linear polynomial over $\mathbb F_{q^n}$ is also viewed as a linear endomorphism of $\mathbb F_{q^n}/\mathbb F_q$. Then $\mathrm{Tr}(x\ell(x))$ as a map from $\mathbb F_{q^n}$ to $\mathbb F_q$ is a quadratic form of $\mathbb F_{q^n}/\mathbb F_q$ (in fact, every quadratic form of $\mathbb F_{q^n}/\mathbb F_q$ can be written in this form), and it induces a homogeneous polynomial over $\mathbb F_q$ of degree $2$ in $n$ indeterminates or a zero polynomial. Quadratic forms over finite fields have attracted significant attention in number theory, with applications in various areas including coding theory, cryptography and combinatorics. Furthermore, they are directly related to the number of rational points of some Artin-Schreier curves over finite fields. More precisely, consider the projective curve over $\mathbb F_{q^n}$ defined by
\[y^q-y-x\ell(x)\in\mathbb F_{q^n}[x,y].\]
If the degree of $\ell$ is $q^m$, then the genus of this curve is $\frac{(q-1)q^m}2$ (see \cite[Proposition 3.7.10]{stichtenoth2009}). For the number $N$ of rational points of the above curve, the well-known Hasse-Weil bound asserts that
\[q^n+1-(q-1)q^{\frac n2+m}\le N\le q^n+1+(q-1)q^{\frac n2+m}.\]
The curve is called a maximal curve if $N$ attains the upper bound, and called a minimal curve if $N$ attains the lower bound. We have special interests in such curves for their unique properties.

There has been a lot of research concerning quadratic forms over finite fields and related maximal or minimal Artin-Schreier curves. To name a few, in \cite{klapper1997cross} and \cite{fitzgerald2017invariants}, the quadratic forms represented by monomials are classified. Also, one may refer to \cite{coulter2002number} for those curves defined by monomials, and to \cite{anbar2014quadratic} for those curves defined by polynomials with coefficients in prime fields. Some other results can be found in \cite{ccakccak2008curves,ozbudak2011quadratic,ozbudak2012quadratic,ozbudak2016explicit,bartoli2021explicit,oliveira2023number,oliveira2023artin}.

In this paper, we will present a general approach in the language of quadratic forms to determine the number of rational points of those Artin-Schreier curves. Given a basis $\alpha_1,\dots,\alpha_n$ of $\mathbb F_{q^n}/\mathbb F_q$, for $w=w_1\alpha_1+\dots+w_n\alpha_n$ with $w_1,\dots,w_n\in\mathbb F_q$ we have
\[\mathrm{Tr}(w\ell(w))=\begin{pmatrix}w_1&\cdots&w_n\end{pmatrix}C\begin{pmatrix}w_1\\\vdots\\w_n\end{pmatrix},\]
where $C$ is an $n\times n$ matrix whose $(i,j)$ entry is $\frac12\mathrm{Tr}(\alpha_i\ell(\alpha_j)+\alpha_j\ell(\alpha_i))$. We call $C$ the associated matrix of the quadratic form $\mathrm{Tr}(x\ell(x))$ with respect to the basis $\alpha_1,\dots,\alpha_n$. Since $C$ is symmetric, it can be diagonalized as $C=P^\mathrm TDP$ for some nonsingular matrix $P$ and some diagonal matrix $D$ over $\mathbb F_q$ (cf. \cite[Theorem 6.21]{lidl1997}). Let $r=\rank C$ be the rank of $C$, and rearrange the entries of $D$ so that its first $r$ diagonal entries $d_1,\dots,d_r$ are nonzero. Then
\begin{equation}\label{eq13}\mathrm{Tr}(w\ell(w))=\begin{pmatrix}w_1&\cdots&w_n\end{pmatrix}P^\mathrm TDP\begin{pmatrix}w_1\\\vdots\\w_n\end{pmatrix}=d_1w_1^2+\dots+d_rw_r^2.\end{equation}
Let $\chi$ be the canonical additive character of $\mathbb F_q$, and $\eta$ be the quadratic character of $\mathbb F_q^*$. For $a\in\mathbb F_q^*$, we have
\[\sum_{u\in\mathbb F_q}\chi(au^2)=\eta(a)\mathcal G,\]
where
\[\mathcal G=\sum_{u\in\mathbb F_q^*}\chi(u)\eta(u)\]
is the quadratic Gauss sum of $\mathbb F_q$ with $\mathcal G^2=\eta(-1)q$. Replacing $\begin{pmatrix}w_1&\cdots&w_n\end{pmatrix}P^\mathrm T$ by $\begin{pmatrix}w_1&\cdots&w_n\end{pmatrix}$ in \eqref{eq13}, we get
\[\begin{split}&\mathrel{\phantom{=}}\sum_{w\in\mathbb F_{q^n}}\chi(\mathrm{Tr}(w\ell(w)))\\&=\sum_{w_1,\dots,w_n\in\mathbb F_q}\chi(d_1w_1^2+\dots+d_rw_r^2)\\&=\sum_{w_1\in\mathbb F_q}\chi(d_1w_1^2)\cdots\sum_{w_r\in\mathbb F_q}\chi(d_rw_r^2)\sum_{w_{r+1},\dots,w_n\in\mathbb F_q}1\\&=\eta(d_1\cdots d_r)\mathcal G^rq^{n-r}.\end{split}\]
Here the numbers $r$, called the rank of the quadratic form, and $\eta(d_1\cdots d_r)$, denoted by $\epsilon$, act as invariants of the quadratic form $\mathrm{Tr}(x\ell(x))$, which do not depend on the choice of basis. In particular, if $C$ is nonsingular, then
\[\epsilon=\eta(d_1\cdots d_r)=\eta(\det D)=\eta(\det C)\]
since $C=P^\mathrm TDP$ with $\det P\in\mathbb F_q^*$.

Now we are ready for the number of roots of $\mathrm{Tr}(x\ell(x))+\lambda$ in $\mathbb F_{q^n}$ for $\lambda\in\mathbb F_q$, which is, by the orthogonality relations for characters,
\[\begin{split}&\mathrel{\phantom{=}}q^{-1}\sum_{w\in\mathbb F_{q^n}}\sum_{u\in\mathbb F_q}\chi(u\mathrm{Tr}(w\ell(w))+u\lambda)\\&=q^{-1}\sum_{u\in\mathbb F_q}\sum_{w_1,\dots,w_n\in\mathbb F_q}\chi(u(d_1w_1^2+\dots+d_rw_r^2))\chi(u\lambda)\\&=q^{n-1}+q^{-1}\sum_{u\in\mathbb F_q^*}\eta(u)^r\epsilon\mathcal G^rq^{n-r}\chi(u\lambda);\end{split}\]
that is
\[\begin{cases}q^{n-1}+(-1)^\frac{(q-1)r}4\epsilon(q-1)q^{n-1-\frac r2}&\text{if }r\text{ is even},\\q^{n-1}&\text{if }r\text{ is odd}\end{cases}\]
when $\lambda=0$, and is
\[\begin{cases}q^{n-1}-(-1)^\frac{(q-1)r}4\epsilon q^{n-1-\frac r2}&\text{if }r\text{ is even},\\q^{n-1}+(-1)^\frac{(q-1)(r+1)}4\eta(\lambda)\epsilon q^{n-1-\frac{r-1}2}&\text{if }r\text{ is odd}\end{cases}\]
when $\lambda\ne0$. For the Artin-Schreier curve over $\mathbb F_{q^n}$ defined by $y^q-y-x\ell(x)$, it has exactly one point at infinity. Given $x_0\in\mathbb F_{q^n}$, $y_0^q-y_0=x_0\ell(x_0)$ for some $y_0\in\mathbb F_{q^n}$ if and only if $\mathrm{Tr}(x_0\ell(x_0))=0$, in which case the number of such $y_0$ in $\mathbb F_{q^n}$ is $q$. Therefore, we obtain the following conclusion (cf. \cite[Corollary 3.7]{anbar2014quadratic}).

\begin{lemma}\label{maximal}
The Artin-Schreier curve over $\mathbb F_{q^n}$ defined by $y^q-y-x\ell(x)$ with $\deg\ell=q^m$ is maximal or minimal if and only if $n$ is even and $r=n-2m$. In this case, it is maximal if $\epsilon=(-1)^\frac{(q-1)r}4$, and is minimal otherwise.
\end{lemma}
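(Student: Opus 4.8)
The plan is to assemble the total number $N$ of rational points from the character-sum computation already carried out, and then to compare it with the two sides of the Hasse-Weil bound. Since the curve has a single point at infinity, and each $x_0\in\mathbb F_{q^n}$ with $\mathrm{Tr}(x_0\ell(x_0))=0$ contributes exactly $q$ affine points while every other $x_0$ contributes none, we have $N=1+q\cdot\#\{x_0\in\mathbb F_{q^n}:\mathrm{Tr}(x_0\ell(x_0))=0\}$. Substituting the $\lambda=0$ root count found above yields
\[N=\begin{cases}q^n+1+(-1)^{\frac{(q-1)r}{4}}\epsilon(q-1)q^{n-\frac r2}&\text{if }r\text{ is even},\\q^n+1&\text{if }r\text{ is odd}.\end{cases}\]

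First I would dispose of the case that $r$ is odd: there $N=q^n+1$ lies strictly between the two bounds because $(q-1)q^{\frac n2+m}>0$, so the curve is neither maximal nor minimal. Hence extremality forces $r$ to be even, and in that case I write $N-(q^n+1)=s(q-1)q^{n-\frac r2}$ with $s:=(-1)^{\frac{(q-1)r}{4}}\epsilon\in\{\pm1\}$; note that $s$ is well defined since $q-1$ and $r$ are both even, so $\frac{(q-1)r}{4}$ is an integer.

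Next I would match $s(q-1)q^{n-\frac r2}$ against $\pm(q-1)q^{\frac n2+m}$. Dividing by $q-1>0$ and using that $t\mapsto q^t$ is strictly increasing forces the exponent identity $n-\frac r2=\frac n2+m$, that is $r=n-2m$, together with the sign matching $s=+1$ for the upper bound and $s=-1$ for the lower one. Since $r$ is even, $r=n-2m$ makes $n=r+2m$ even, which recovers the parity condition in the statement. Conversely, if $n$ is even and $r=n-2m$, then $r$ is even and $n-\frac r2=\frac n2+m$, so $N=q^n+1+s(q-1)q^{\frac n2+m}$ attains the upper bound when $s=1$ and the lower bound when $s=-1$.

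Reading off the sign completes the proof: $s=1$ is exactly the condition $\epsilon=(-1)^{\frac{(q-1)r}{4}}$ and produces a maximal curve, while $s=-1$ produces a minimal curve. The whole argument is essentially bookkeeping on top of the point count already established; the one spot deserving care is the exponent-matching step, where it is important that $N-(q^n+1)$ is pinned to the single monomial $s(q-1)q^{n-\frac r2}$, so that the bound can be met in no way other than through $n-\frac r2=\frac n2+m$.
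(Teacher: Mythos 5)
Your proposal is correct and follows essentially the same route as the paper, which derives the lemma directly from the $\lambda=0$ root count $q^{n-1}+(-1)^{\frac{(q-1)r}4}\epsilon(q-1)q^{n-1-\frac r2}$ (for $r$ even), the relation $N=1+q\cdot\#\{x_0:\mathrm{Tr}(x_0\ell(x_0))=0\}$, and comparison with the Hasse-Weil bound. Your explicit exponent-matching step $n-\frac r2=\frac n2+m$ and the observation that $r$ even together with $r=n-2m$ forces $n$ even just spell out the bookkeeping the paper leaves implicit.
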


We have seen that the invariants $r$ and $\epsilon$ carry essential information about the quadratic form, which comprise the central topic of this paper. In Section \ref{class}, we propose a general method to determine the invariants so as to classify all quadratic forms represented by polynomials over finite fields. In Section \ref{base}, we focus on a special case where all the coefficients of the polynomials belong to $\mathbb F_q$, providing a complete characterization. In Section \ref{curve}, we study some polynomials of simple forms inducing maximal or minimal Artin-Schreier curves.

The following notation will be used throughout. For $\mathbb F_{q^n}/\mathbb F_q$ with $n>1$ and $q$ odd, let $\ell$ be a nonzero $q$-linear polynomial over $\mathbb F_{q^n}$ of degree less than $q^n$. Denote by $\mathrm{Tr}$ and $\mathrm N$ the trace map and the norm map of $\mathbb F_{q^n}/\mathbb F_q$ respectively. The invariant $\epsilon$ of the quadratic form $\mathrm{Tr}(x\ell(x))$ is defined above, and $\eta$ is the quadratic character of $\mathbb F_q^*$.

Consider $\mathbb F_{q^n}$ as a vector space over $\mathbb F_q$. For a subspace $W$ of $\mathbb F_{q^n}$, denote by $W^\perp$ the subspace consisting of all elements $\alpha\in\mathbb F_{q^n}$ such that $\mathrm{Tr}(\alpha x)$ vanishes on $W$ (one can check that $(W^\perp)^\perp=W$ and $\dim_{\mathbb F_q}W+\dim_{\mathbb F_q}W^\perp=n$). Given a $q$-linear polynomial $L$ over $\mathbb F_{q^n}$ of the form
\[L(x)=\sum_{i=0}^{n-1}a_ix^{q^i},\]
define $L^\prime$ as
\[L^\prime(x)=a_0x+\sum_{i=1}^{n-1}(a_ix)^{q^{n-i}},\]
and let
\[M_L=\begin{pmatrix}a_0&a_1&\cdots&a_{n-1}\\a_{n-1}^q&a_0^q&\cdots&a_{n-2}^q\\\vdots&\vdots&\ddots&\vdots\\a_1^{q^{n-1}}&a_2^{q^{n-1}}&\cdots&a_0^{q^{n-1}}\end{pmatrix}.\]
For a positive integer $k$ and a matrix $M$ over some field, let $M^{(k)}$ denote the submatrix of $M$ formed by its first $k$ rows and $k$ columns.

\section{Classification of quadratic forms by matrices}\label{class}

For $L$ and $L^\prime$ defined above, it is clear that $\mathrm{Tr}(\alpha L(\beta))=\mathrm{Tr}(L^\prime(\alpha)\beta)$ for all $\alpha,\beta\in\mathbb F_{q^n}$. In addition, $L^\prime(\alpha)=0$ if and only if $\mathrm{Tr}(\alpha L(\beta))=0$ for all $\beta\in\mathbb F_{q^n}$, if and only if $\alpha\in(\im L)^\perp$. This means $\ker L^\prime=(\im L)^\perp$.

Let $\alpha_1,\dots,\alpha_n$ be a basis of $\mathbb F_{q^n}/\mathbb F_q$ with a matrix
\begin{equation}\label{A}A=\begin{pmatrix}\alpha_1&\alpha_2&\cdots&\alpha_n\\\alpha_1^q&\alpha_2^q&\cdots&\alpha_n^q\\\vdots&\vdots&\ddots&\vdots\\\alpha_1^{q^{n-1}}&\alpha_2^{q^{n-1}}&\cdots&\alpha_n^{q^{n-1}}\end{pmatrix}.\end{equation}
Then the $(i,j)$ entry of $A^\mathrm TM_LA$ is
\[\begin{split}\begin{pmatrix}\alpha_i&\alpha_i^q&\cdots&\alpha_i^{q^{n-1}}\end{pmatrix}M_L\begin{pmatrix}\alpha_j\\\alpha_j^q\\\vdots\\\alpha_j^{q^{n-1}}\end{pmatrix}&=\begin{pmatrix}\alpha_i&\alpha_i^q&\cdots&\alpha_i^{q^{n-1}}\end{pmatrix}\begin{pmatrix}L(\alpha_j)\\L(\alpha_j)^q\\\vdots\\L(\alpha_j)^{q^{n-1}}\end{pmatrix}\\&=\mathrm{Tr}(\alpha_iL(\alpha_j)).\end{split}\]
For the quadratic form $\mathrm{Tr}(x\ell(x))$, its associated symmetric matrix $C$ with respect to the basis $\alpha_1,\dots,\alpha_n$ has $(i,j)$ entry
\[\frac12\mathrm{Tr}(\alpha_i\ell(\alpha_j)+\alpha_j\ell(\alpha_i))=\frac12\mathrm{Tr}(\alpha_i\ell(\alpha_j)+\alpha_i\ell^\prime(\alpha_j)).\]
If $L=\frac{\ell+\ell^\prime}2$, then clearly $C=A^\mathrm TM_LA$, and the rank of the quadratic form is $\rank C=\rank M_L$ since $A$ is nonsingular. This will be employed to characterize the quadratic form in terms of the coefficients of $\ell$. For $\rank M_L$, we have the following results.

\begin{theorem}\label{rank}
Let
\[L(x)=\sum_{i=0}^{n-1}a_ix^{q^i}\]
with coefficients in $\mathbb F_{q^n}$. For an integer $k$ with $0<k\le n$, the following conditions are equivalent:
\begin{enumerate}
\item\label{rank_1}$\dim_{\mathbb F_q}\ker L\ge k$;
\item\label{rank_2}$\rank M_L\le n-k$;
\item\label{rank_3}the first $n-k+1$ rows of $M_L$ are linearly dependent over $\mathbb F_{q^n}$.
\end{enumerate}
As a consequence, $\dim_{\mathbb F_q}\ker L+\rank M_L=n$, and the first $\rank M_L$ number of rows of $M_L$ are linearly independent over $\mathbb F_{q^n}$ while the first $\rank M_L+1$ number of rows are not.
\end{theorem}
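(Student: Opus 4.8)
The plan is to exploit a single linear-algebraic identity relating $M_L$ to the map $L$, together with the evident cyclic symmetry of $M_L$ under the Frobenius. For $x\in\mathbb F_{q^n}$ write $\mathbf v(x)=(x,x^q,\dots,x^{q^{n-1}})^{\mathrm T}$. A direct computation of the $i$-th coordinate of $M_L\mathbf v(x)$, using $x^{q^n}=x$ to reduce exponents modulo $n$, gives
\[(M_L\mathbf v(x))_i=\sum_{j=0}^{n-1}a_{(j-i)\bmod n}^{q^i}x^{q^j}=\Bigl(\sum_{s=0}^{n-1}a_sx^{q^s}\Bigr)^{q^i}=L(x)^{q^i},\]
so that $M_L\mathbf v(x)=\mathbf v(L(x))$ for every $x$. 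In particular, the columns of $M_LA$ are exactly $\mathbf v(L(\alpha_1)),\dots,\mathbf v(L(\alpha_n))$, where $A$ is the nonsingular matrix in \eqref{A}.

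From here I would first settle the equivalence of \ref{rank_1} and \ref{rank_2} together with the dimension formula. Since $A$ is nonsingular, $\rank M_L=\rank M_LA$, and the latter equals the dimension over $\mathbb F_{q^n}$ of the span of $\mathbf v(L(\alpha_1)),\dots,\mathbf v(L(\alpha_n))$; as $\mathbf v$ is $\mathbb F_q$-linear and the $L(\alpha_j)$ generate $\im L$ over $\mathbb F_q$, this span coincides with the $\mathbb F_{q^n}$-span of $\{\mathbf v(\beta):\beta\in\im L\}$. The map $\mathbf v$ is injective and sends any $\mathbb F_q$-linearly independent set to an $\mathbb F_{q^n}$-linearly independent one, since completing such a set to a basis of $\mathbb F_{q^n}/\mathbb F_q$ turns the corresponding $\mathbf v$-vectors into the columns of a nonsingular matrix of the shape \eqref{A}. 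Hence that span has dimension $\dim_{\mathbb F_q}\im L$, giving $\rank M_L=\dim_{\mathbb F_q}\im L=n-\dim_{\mathbb F_q}\ker L$. This is the asserted identity $\dim_{\mathbb F_q}\ker L+\rank M_L=n$, and \ref{rank_1}$\Leftrightarrow$\ref{rank_2} is immediate.

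The remaining work, and the part I expect to be the main obstacle, is to show that the rank is actually witnessed by the \emph{leading} rows, which is what \ref{rank_3} and the final clause require. For this I would record the Frobenius symmetry of $M_L$: writing $R_0,\dots,R_{n-1}$ for its rows, the rule $(M_L)_{i,j}=a_{(j-i)\bmod n}^{q^i}$ shows that $R_{i+1}=T(R_i)$, where $T$ is the $q$-semilinear bijection of $\mathbb F_{q^n}^n$ that raises every entry to the $q$-th power and then shifts coordinates cyclically by one, so that $T(cR_i)=c^qR_{i+1}$ for $c\in\mathbb F_{q^n}$. Now let $t$ be minimal with $R_0,\dots,R_t$ linearly dependent (taking $t=n$ if no dependence occurs); then $R_0,\dots,R_{t-1}$ are independent and $R_t=\sum_{i<t}c_iR_i$ for some $c_i$. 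Applying $T$ gives $R_{t+1}=\sum_{i<t}c_i^qR_{i+1}\in\mathrm{span}(R_1,\dots,R_t)\subseteq\mathrm{span}(R_0,\dots,R_{t-1})$, and inductively every row lies in $\mathrm{span}(R_0,\dots,R_{t-1})$. Thus the row space is spanned by the $t$ independent rows $R_0,\dots,R_{t-1}$, so $\rank M_L=t$; this is exactly the statement that the first $\rank M_L$ rows are independent while the first $\rank M_L+1$ are not. Finally \ref{rank_2}$\Leftrightarrow$\ref{rank_3} follows, since $\rank M_L\le n-k$ means $t\le n-k$, which is equivalent to $R_0,\dots,R_{n-k}$ being dependent, i.e. to the first $n-k+1$ rows being linearly dependent. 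The one point requiring care throughout is keeping the index arithmetic modulo $n$ consistent when verifying $R_{i+1}=T(R_i)$ and when tracking the Frobenius twists on the coefficients $c_i$.
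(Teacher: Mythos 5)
Your proof is correct, and it takes a genuinely different route from the paper's at the decisive step. Both arguments begin with the identity $M_L\mathbf v(x)=\mathbf v(L(x))$, where $\mathbf v(x)=(x,x^q,\dots,x^{q^{n-1}})^{\mathrm T}$ (the paper states it as: the columns of $M_LA$ are the vectors $\mathbf v(L(\alpha_j))$), but the paper then proves the cycle (i)$\Rightarrow$(ii)$\Rightarrow$(iii)$\Rightarrow$(i), whose crux (iii)$\Rightarrow$(i) converts a dependence $\begin{pmatrix}b_0&\cdots&b_{n-k}&0&\cdots&0\end{pmatrix}M_L=0$ into the composition identity $L_0(L(x))\equiv0\pmod{x^{q^n}-x}$ for $L_0(x)=\sum_{i=0}^{n-k}b_ix^{q^i}$, so that $\im L\subseteq\ker L_0$ and the degree bound $\deg L_0\le q^{n-k}$ forces $\dim_{\mathbb F_q}\ker L_0\le n-k$; the final clause about leading rows is then extracted by running the three-way equivalence at two values of $k$. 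You avoid the composition trick entirely: you obtain $\rank M_L=\dim_{\mathbb F_q}\im L$ directly from the fact that Moore vectors of $\mathbb F_q$-independent elements are $\mathbb F_{q^n}$-independent (completion to a basis plus nonsingularity of a matrix of the shape \eqref{A}, the same standard fact the paper already relies on), and you get the leading-rows clause structurally from the twisted-circulant symmetry $R_{i+1}=T(R_i)$: since the $q$-semilinear shift $T$ maps the span of a minimal dependent prefix into itself, that prefix already spans the row space --- a Krylov-style invariance argument whose index bookkeeping and Frobenius twists you track correctly, and from which (ii)$\Leftrightarrow$(iii) is immediate. The trade-off: your version is more self-contained linear algebra and explains \emph{why} the rank is always witnessed by the first rows, rather than deriving it as a by-product of the equivalences; the paper's version builds machinery it reuses later, since the same dependence-to-$L_0$ translation of \eqref{eq7} and \eqref{eq8} reappears in the proof of Theorem \ref{det}, where the dependence coefficients are identified with those of a monic annihilating polynomial and fed into Cramer's rule to compute $\epsilon$.
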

\begin{proof}
First, suppose that $\dim_{\mathbb F_q}\ker L\ge k$. Take a basis of $\ker L/\mathbb F_q$ and extend it to a basis of $\mathbb F_{q^n}/\mathbb F_q$, denoted by $\alpha_1,\dots,\alpha_n$, so that
\[M_L\begin{pmatrix}\alpha_1&\alpha_2&\cdots&\alpha_n\\\alpha_1^q&\alpha_2^q&\cdots&\alpha_n^q\\\vdots&\vdots&\ddots&\vdots\\\alpha_1^{q^{n-1}}&\alpha_2^{q^{n-1}}&\cdots&\alpha_n^{q^{n-1}}\end{pmatrix}=\begin{pmatrix}L(\alpha_1)&L(\alpha_2)&\cdots&L(\alpha_n)\\L(\alpha_1)^q&L(\alpha_2)^q&\cdots&L(\alpha_n)^q\\\vdots&\vdots&\ddots&\vdots\\L(\alpha_1)^{q^{n-1}}&L(\alpha_2)^{q^{n-1}}&\cdots&L(\alpha_n)^{q^{n-1}}\end{pmatrix},\]
where the matrix on the right side has at least $k$ columns being zero. Then it has rank at most $n-k$, and so does $M_L$.

Next, \ref{rank_2} implies \ref{rank_3} since the rank of $M_L$ is the maximal number of its rows that are linearly independent over $\mathbb F_{q^n}$.

Suppose now that the first $n-k+1$ rows of $M_L$ are linearly dependent over $\mathbb F_{q^n}$. Then
\[\begin{pmatrix}b_0&b_1&\cdots&b_{n-k}&0&\cdots&0\end{pmatrix}M_L\]
is zero for some $b_0,b_1,\dots,b_{n-k}\in\mathbb F_{q^n}$. Note that
\begin{equation}\label{eq7}\begin{split}&\mathrel{\phantom{=}}\begin{pmatrix}b_0&b_1&\cdots&b_{n-k}&0&\cdots&0\end{pmatrix}M_L\\&=\begin{pmatrix}b_0&b_1&\cdots&b_{n-k}&0&\cdots&0\end{pmatrix}\begin{pmatrix}a_0&a_1&\cdots&a_{n-1}\\a_{n-1}^q&a_0^q&\cdots&a_{n-2}^q\\\vdots&\vdots&\ddots&\vdots\\a_1^{q^{n-1}}&a_2^{q^{n-1}}&\cdots&a_0^{q^{n-1}}\end{pmatrix}\\&=\begin{pmatrix}\sum_{i=0}^{n-k}b_ia_{-i}^{q^i}&\sum_{i=0}^{n-k}b_ia_{1-i}^{q^i}&\cdots&\sum_{i=0}^{n-k}b_ia_{n-1-i}^{q^i}\end{pmatrix},\end{split}\end{equation}
where the subscripts are taken modulo $n$. Therefore, for $L_0(x)=\sum_{i=0}^{n-k}b_ix^{q^i}$, one has
\begin{equation}\label{eq8}L_0(L(x))=\sum_{i=0}^{n-k}b_i\sum_{j=0}^{n-1}a_j^{q^i}x^{q^{i+j}}\equiv\sum_{j=0}^{n-1}\sum_{i=0}^{n-k}b_ia_{j-i}^{q^i}x^{q^j}\equiv0\pmod{x^{q^n}-x},\end{equation}
and then $\im L\subseteq\ker L_0$. As
\[\dim_{\mathbb F_q}\im L\le\dim_{\mathbb F_q}\ker L_0\le n-k,\]
it turns out that $\dim_{\mathbb F_q}\ker L\ge k$. This indicates that \ref{rank_3} implies \ref{rank_1}.
\end{proof}

For $L=\frac{\ell+\ell^\prime}2$, we will see that $\epsilon$ depends on some submatrix of $M_L$. In fact, for $r=\dim_{\mathbb F_q}\im L$, the submatrix $M_L^{(r)}$ is nonsingular and its determinant will be used to determine $\epsilon$.

\begin{theorem}\label{det}
Let $L=\frac{\ell+\ell^\prime}2$, and suppose that $\dim_{\mathbb F_q}\im L=r$ for some integer $r$ with $0<r\le n$. If $r=n$, then $\det M_L\in\mathbb F_q$ and $\epsilon=(-1)^{n-1}\eta(\det M_L)$. Suppose that $r<n$. Then $\det M_L^{(r)}\ne0$, and
\begin{enumerate}
\item\label{det_1}$\Big(\det M_L^{(r)}\Big)^{q+1}=\Big(\det M_L^{(r,1)}\Big)^2$, where $M_L^{(r,1)}$ is the submatrix formed by the $r$ consecutive rows starting from the second row of $M_L$ along with the first $r$ columns of $M_L$, and $\epsilon=1$ if and only if $\Big(\det M_L^{(r)}\Big)^\frac{q+1}2=\det M_L^{(r,1)}$; 
\item\label{det_2}$\epsilon=1$ if and only if $\det M_L^{(r)}$ is a square in $\mathbb F_{q^n}$, when $n$ is odd;
\item\label{det_3}$a^{2q^r\frac{q^m-1}{q-1}}\det M_L^{(r)}\in\mathbb F_q$ and $\epsilon=(-1)^{n-1}\eta\Big(a^{2q^r\frac{q^m-1}{q-1}}\det M_L^{(r)}\Big)$, when $r=n-2m$ and $\ell$ has degree $q^m$ with leading coefficient $a$.
\end{enumerate}
\end{theorem}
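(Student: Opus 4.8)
The whole proof rests on the identity $C=A^{\mathrm T}M_LA$ established above, which equates $\rank C$ with $r=\dim_{\mathbb F_q}\im L$ and lets $\epsilon$ be read off as a square class of a determinant, together with two structural features of $M_L$: it is symmetric (because $L=\frac{\ell+\ell'}2$ is self-adjoint, i.e.\ $L=L'$), and it is Frobenius-cyclic, meaning $P_\sigma^{-1}M_LP_\sigma=\phi(M_L)$, where $\phi$ is the entrywise $q$-power and $P_\sigma$ is the matrix of the cyclic shift $i\mapsto i+1\pmod n$; this holds because the $(i,j)$ entry of $M_L$ is $a_{j-i}^{q^i}$. For $r=n$ these give the result quickly: taking determinants in $P_\sigma^{-1}M_LP_\sigma=\phi(M_L)$ yields $\det M_L=(\det M_L)^q$, so $\det M_L\in\mathbb F_q$. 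Since $\det C=(\det A)^2\det M_L$ and $(\det A)^2=\det(A^{\mathrm T}A)=\det(\mathrm{Tr}(\alpha_i\alpha_j))=:\Delta\in\mathbb F_q^*$ is the discriminant of the basis, I get $\epsilon=\eta(\det C)=\eta(\Delta)\eta(\det M_L)$. The classical fact $\eta(\Delta)=(-1)^{n-1}$ (Frobenius permutes the conjugates as an $n$-cycle, of sign $(-1)^{n-1}$; equivalently Stickelberger's parity theorem for the irreducible minimal polynomial) then gives $\epsilon=(-1)^{n-1}\eta(\det M_L)$.

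Now suppose $r<n$ and grant for the moment that $d:=\det M_L^{(r)}\ne0$. By Theorem \ref{rank} the first $r$ rows of $M_L$ are independent while the first $r+1$ are not, so (indexing rows from $0$) $\text{row}_r=\sum_{i=0}^{r-1}\nu_i\,\text{row}_i$ for unique $\nu_i\in\mathbb F_{q^n}$; applying the inverse shift-Frobenius shows $\nu_0\ne0$, since otherwise $\text{row}_0,\dots,\text{row}_{r-1}$ would already be dependent. Restricting this relation to the first $r$ columns and expanding $d_1:=\det M_L^{(r,1)}$ (only the $\text{row}_0$ contribution survives) gives $d_1=(-1)^{r-1}\nu_0 d$. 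On the other hand, the Frobenius-cyclic relation identifies $d^q=\det\phi(M_L^{(r)})$ with the principal minor of $M_L$ on the indices shifted up by one; expanding it by the same recurrence and using the symmetry of $M_L$ to rewrite the resulting minor as $d_1$ gives $d^q=(-1)^{r-1}\nu_0 d_1=\nu_0^2 d$. Eliminating $\nu_0$ yields $(\det M_L^{(r)})^{q+1}=(\det M_L^{(r,1)})^2$, the identity of (i); hence $d_1=\pm d^{(q+1)/2}$, and it remains to show that this sign equals $\epsilon$.

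To identify the square class I factor through $M_L^{(r)}$. Because $\rank M_L=r$ and $M_L^{(r)}$ is nonsingular, the Schur complement of $M_L^{(r)}$ in $M_L$ vanishes, so $M_L=E^{\mathrm T}M_L^{(r)}E$ for some $r\times n$ matrix $E$, whence $C=F^{\mathrm T}M_L^{(r)}F$ with $F=EA$ of rank $r$. A second Schur-complement computation shows that for the rank-$r$ matrix $C$ every nonsingular $r\times r$ principal submatrix $C_0$ satisfies $\eta(\det C_0)=\epsilon$; choosing $C_0=F_0^{\mathrm T}M_L^{(r)}F_0$ for a nonsingular $r\times r$ column-block $F_0$ of $F$ gives $\epsilon=\eta\big((\det F_0)^2 d\big)$, and matching this with the sign $d_1/d^{(q+1)/2}$ completes (i). Part (ii) is then immediate: for $b\in\mathbb F_q^*$ one has $b^{(q^n-1)/2}=\eta(b)^n$, so for odd $n$ an element of $\mathbb F_q^*$ is a square in $\mathbb F_{q^n}$ iff it is a square in $\mathbb F_q$; applying this to $b=\det C_0=(\det F_0)^2 d$ yields $\epsilon=1\iff d$ is a square in $\mathbb F_{q^n}$.

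For (iii), observe that $\text{row}_r=\sum\nu_i\,\text{row}_i$ is equivalent to $L_1\circ L\equiv0\pmod{x^{q^n}-x}$ with $L_1(x)=x^{q^r}-\sum_{i<r}\nu_i x^{q^i}$; as $\dim\im L=r$, $L_1$ must be the subspace polynomial $\prod_{\alpha\in\im L}(x-\alpha)$, so $-\nu_0$ is its coefficient of $x$, i.e.\ the product of the nonzero elements of $\im L$. When $\deg\ell=q^m$ and $r=n-2m$, the polynomial $L$ has degree $q^{n-m}$ with leading coefficient proportional to $a^{q^{n-m}}$, and a direct evaluation of this trailing coefficient gives $\nu_0=a^{-(q-1)(q^r+\cdots+q^{r+m-1})}$. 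Substituting into $d^q=\nu_0^2 d$ shows $a^{2q^r(q^m-1)/(q-1)}\det M_L^{(r)}\in\mathbb F_q$, and substituting into $d_1=(-1)^{r-1}\nu_0 d$ together with (i) and $(-1)^{r-n}=1$ gives $\epsilon=(-1)^{n-1}\eta\big(a^{2q^r(q^m-1)/(q-1)}\det M_L^{(r)}\big)$. I expect the main obstacle to be the standing assumption $\det M_L^{(r)}\ne0$: a vanishing leading minor produces $L_0$ of $q$-degree $<r$ with $L_0\circ L$ of valuation $\ge q^r$, which is not contradictory on its own (witness monomials), so self-adjointness must enter essentially---e.g.\ by playing this valuation bound against the dual degree bound on the adjoint $L\circ L_0'$ and the independence of the first $r$ rows. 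The other delicate point is the exact sign matching in (i) (and hence the $(-1)^{n-1}$ in (iii)), which requires relating the recurrence coefficient $\nu_0$ to $\eta(\det C_0)$ rather than merely to its square class.
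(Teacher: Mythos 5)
Your $r=n$ case is correct and essentially the paper's argument ($(\det A)^{q-1}=(-1)^{n-1}$ in your Frobenius-cycle phrasing), and, granting $\det M_L^{(r)}\ne0$, your derivation of the unsigned identity $\big(\det M_L^{(r)}\big)^{q+1}=\big(\det M_L^{(r,1)}\big)^2$ via the shift-Frobenius symmetry of $M_L$ and the row recurrence $\mathrm{row}_r=\sum_{i<r}\nu_i\,\mathrm{row}_i$ is sound and pleasantly different in flavor from the paper's. Your Schur-complement factorization, which in effect gives $\det C=(\det F_0)^2\det M_L^{(r)}$ with $F_0$ over $\mathbb F_{q^n}$, is also valid and does suffice for \ref{det_2}. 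But the proposal leaves unproved exactly the parts that carry the content of the theorem, and the two obstacles you flag at the end are not incidental: they are the theorem. First, the nonvanishing $\det M_L^{(r)}\ne0$ is an assertion of the statement, not a standing hypothesis, and it is genuinely false for general symmetric rank-$r$ matrices (e.g.\ $\mathrm{diag}(0,1)$), so your deferral is a real gap; your closing sketch about valuations of $L_0\circ L$ does not resolve it. Second, the sign identification in \ref{det_1} is missing: from your factorization, $\epsilon$ is represented by $(\det F_0)^{q-1}d^{(q-1)/2}$, and you still need $(\det F_0)^{q-1}=d_1/d$, i.e.\ the link between $\nu_0$ and the square-class factor, which you explicitly admit you do not have. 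Third, in \ref{det_3} the formula $\nu_0=a^{-q^r(q^m-1)}$ is asserted by ``a direct evaluation of this trailing coefficient,'' but computing the $x$-coefficient of the subspace polynomial of $\im L$ from the leading coefficient of $L$ is not a routine evaluation; and your conclusion for \ref{det_3} additionally uses the signed version of \ref{det_1}, so it inherits the second gap.

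The paper closes all three holes with a single device your proposal lacks: the dual basis. Choosing $\alpha_{r+1},\dots,\alpha_n\in\ker L$ and the dual basis $\beta_1,\dots,\beta_n$, self-adjointness $L=L'$ gives $\im L=(\ker L)^\perp=\langle\beta_1,\dots,\beta_r\rangle$, and partitioning $A^{-1}$ yields $M_L^{(r)}=B_{11}^\mathrm TCB_{11}$, hence \eqref{eq5}: $\det M_L^{(r)}=(\det B_{11})^2\det C\ne0$ because $B_{11}$ is the Moore matrix of the independent $\beta_1,\dots,\beta_r$ --- that is the proof of nonvanishing (so your instinct that self-adjointness must enter essentially is right, but this is how it enters). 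The same Moore determinant, expanded with an extra row $x,x^q,\dots,x^{q^r}$, exhibits the subspace polynomial $L_0$ of $\im L$ with lowest coefficient $(-1)^r(\det B_{11})^{q-1}$; Cramer's rule applied to the dependence of the first $r+1$ rows then gives $(\det B_{11})^{q-1}=\det M_L^{(r,1)}\big/\det M_L^{(r)}$, which is precisely the missing identification of your $\nu_0$ (indeed $\nu_0=(-1)^{r-1}(\det B_{11})^{q-1}$) with the square-class factor, yielding $\big(\det M_L^{(r)}\big)^\frac{q+1}2=\epsilon\det M_L^{(r,1)}$. Finally, for \ref{det_3} the paper does not evaluate the product of the nonzero elements of $\im L$ directly: it builds the auxiliary monic $L_1$ of degree $q^{2m}$ with $L_1'(x)\equiv2a^{-q^{n-2m}}L(x)^{q^{n-m}}\pmod{x^{q^n}-x}$, so that $\ker L_1'=\ker L$, hence $\im L_1=\ker L_0$ and $L_0(L_1(x))=x^{q^n}-x$ exactly; equating constant terms gives $(-1)^r(\det B_{11})^{q-1}a^{q^r(q^m-1)}=-1$, which is your $\nu_0$-formula together with its proof. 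So the skeleton of your computation is compatible with the paper's, but without the dual-basis factorization you have verified only the squared identity in \ref{det_1} and statement \ref{det_2}, conditional on a nonvanishing claim you have not established.
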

\begin{proof}
Take a basis $\alpha_1,\dots,\alpha_n$ of $\mathbb F_{q^n}/\mathbb F_q$ such that $\alpha_{r+1},\dots,\alpha_n\in\ker L$, and define $A$ as in \eqref{A}. By the discussion preceding Theorem \ref{rank}, the quadratic form $\mathrm{Tr}(x\ell(x))$ has an associated symmetric matrix written as
\[\begin{pmatrix}C&\mathbf0\\\mathbf0&\mathbf0\end{pmatrix}=A^\mathrm TM_LA,\]
where $C$ is an $r\times r$ submatrix over $\mathbb F_q$ and other entries are all zero. Let $C=P^\mathrm TDP$ for some nonsingular matrix $P$ and some diagonal matrix $D$ over $\mathbb F_q$, so that
\[\begin{pmatrix}C&\mathbf0\\\mathbf0&\mathbf0\end{pmatrix}=\begin{pmatrix}P^\mathrm T&\mathbf0\\\mathbf0&I_{n-r}\end{pmatrix}\begin{pmatrix}D&\mathbf0\\\mathbf0&\mathbf0\end{pmatrix}\begin{pmatrix}P&\mathbf0\\\mathbf0&I_{n-r}\end{pmatrix},\]
where $I_{n-r}$ is the identity matrix of order $n-r$ over $\mathbb F_q$. It follows from Theorem \ref{rank} that $\rank C=\rank M_L=r$, so the diagonal entries of $D$ are all nonzero and $\epsilon=\eta(\det D)=\eta(\det C)$.

Observe that
\[(\det A)^q=\begin{vmatrix}\alpha_1^q&\alpha_2^q&\cdots&\alpha_n^q\\\alpha_1^{q^2}&\alpha_2^{q^2}&\cdots&\alpha_n^{q^2}\\\vdots&\vdots&\ddots&\vdots\\\alpha_1&\alpha_2&\cdots&\alpha_n\end{vmatrix}=(-1)^{n-1}\det A,\]
so $(\det A)^2\in\mathbb F_q$. If $r=n$, then $C=A^\mathrm TM_LA$, and
\[\epsilon=\eta((\det A)^2\det M_L)=(-1)^{n-1}\eta(\det M_L).\]

Assume now that $r<n$ and let $\beta_1,\dots,\beta_n$ be the dual basis of $\alpha_1,\dots,\alpha_n$ in $\mathbb F_{q^n}/\mathbb F_q$, so that
\[A^{-1}=\begin{pmatrix}\beta_1&\beta_1^q&\cdots&\beta_1^{q^{n-1}}\\\beta_2&\beta_2^q&\cdots&\beta_2^{q^{n-1}}\\\vdots&\vdots&\ddots&\vdots\\\beta_n&\beta_n^q&\cdots&\beta_n^{q^{n-1}}\end{pmatrix}.\]
Partition it as
\[A^{-1}=\begin{pmatrix}B_{11}&B_{12}\\B_{21}&B_{22}\end{pmatrix},\]
where $B_{11}$ is an $r\times r$ submatrix. Then
\[\begin{split}M_L&=\big(A^\mathrm T\big)^{-1}\begin{pmatrix}C&\mathbf0\\\mathbf0&\mathbf0\end{pmatrix}A^{-1}\\&=\begin{pmatrix}B_{11}^\mathrm T&B_{21}^\mathrm T\\B_{12}^\mathrm T&B_{22}^\mathrm T\end{pmatrix}\begin{pmatrix}C&\mathbf0\\\mathbf0&\mathbf0\end{pmatrix}\begin{pmatrix}B_{11}&B_{12}\\B_{21}&B_{22}\end{pmatrix}\\&=\begin{pmatrix}B_{11}^\mathrm TC&\mathbf0\\B_{12}^\mathrm TC&\mathbf0\end{pmatrix}\begin{pmatrix}B_{11}&B_{12}\\B_{21}&B_{22}\end{pmatrix}\\&=\begin{pmatrix}B_{11}^\mathrm TCB_{11}&B_{11}^\mathrm TCB_{12}\\B_{12}^\mathrm TCB_{11}&B_{12}^\mathrm TCB_{12}\end{pmatrix},\end{split}\]
and
\begin{equation}\label{eq5}\det M_L^{(r)}=(\det B_{11})^2\det C.\end{equation}
Note that
\[\begin{split}&\mathrel{\phantom{=}}\begin{vmatrix}\beta_1&\beta_1^q&\cdots&\beta_1^{q^{r-1}}&\beta_1^{q^r}\\\beta_2&\beta_2^q&\cdots&\beta_2^{q^{r-1}}&\beta_2^{q^r}\\\vdots&\vdots&\ddots&\vdots&\vdots\\\beta_r&\beta_r^q&\cdots&\beta_r^{q^{r-1}}&\beta_r^{q^r}\\x&x^q&\cdots&x^{q^{r-1}}&x^{q^r}\end{vmatrix}\\&=\det B_{11}x^{q^r}+\dots+(-1)^r(\det B_{11})^qx,\end{split}\]
as a linear endomorphism of $\mathbb F_{q^n}/\mathbb F_q$ whose kernel is exactly the subspace spanned by $\beta_1,\dots,\beta_r$, as is easily verified. Also, $\det B_{11}$ is nonzero, and so is $\det M_L^{(r)}$. Divide the above polynomial by $\det B_{11}$ to get a monic polynomial
\[L_0(x)=x^{q^r}+\dots+(-1)^r(\det B_{11})^{q-1}x.\]
The statement \ref{det_2} follows immediately from \eqref{eq5} since $(\det C)^\frac{q^n-1}2=(\det C)^\frac{q-1}2$ when $n$ is odd. Since $L=L^\prime$, we have $\im L=(\ker L)^\perp=\ker L_0$, and
\[L_0(L(x))\equiv0\pmod{x^{q^n}-x}.\]
By the same argument as in \eqref{eq7} and \eqref{eq8}, we find that
\[\begin{pmatrix}(-1)^r(\det B_{11})^{q-1}&\cdots&1\end{pmatrix}\begin{pmatrix}\\&M_L^{(r)}&\\\\&\upsilon\end{pmatrix}\]
is zero, where $\upsilon$ is the $1\times r$ submatrix formed by the first $r$ entries of the $(r+1)$-st row of $M_L$. Cramer's rule leads to
\[(-1)^r(\det B_{11})^{q-1}=(-1)^r\frac{\det M_L^{(r,1)}}{\det M_L^{(r)}}.\]
This, coupled with
\[\Big(\det M_L^{(r)}\Big)^\frac{q-1}2=(\det B_{11})^{q-1}(\det C)^\frac{q-1}2,\]
implies \ref{det_1}.

To prove \ref{det_3}, suppose that $\dim_{\mathbb F_q}\ker L=2m$ and
\[\ell(x)+\ell^\prime(x)=\cdots+(ax)^{q^{n-m}}+ax^{q^m}+\cdots.\]
Let $L_1$ be the polynomial over $\mathbb F_{q^n}$ of degree less than $q^n$ such that
\[L_1^\prime(x)\equiv2a^{-q^{n-2m}}L(x)^{q^{n-m}}\pmod{x^{q^n}-x};\]
that is
\[L_1^\prime(x)=\cdots+x^{q^{n-2m}}+a^{q^{n-m}-q^{n-2m}}x,\]
with
\[L_1(x)=x^{q^{2m}}+\dots+a^{q^{n-m}-q^{n-2m}}x.\]
Then $\ker L_1^\prime=\ker L$, spanned by $\alpha_{r+1},\dots,\alpha_n$. Accordingly, $\im L_1=(\ker L_1^\prime)^\perp=\ker L_0$ and
\[L_0(L_1(x))\equiv0\pmod{x^{q^n}-x}.\]
It has been seen that $L_0$ is monic of degree $q^r$, and $L_1$ is monic of degree $q^{2m}$ by a simple investigation, which means
\[L_0(L_1(x))=x^{q^n}-x.\]
Equating the constant terms gives
\[(-1)^r(\det B_{11})^{q-1}a^{q^r(q^m-1)}=-1.\]
Finally, taking \eqref{eq5} into account we obtain
\[a^{2q^r\frac{q^m-1}{q-1}}\det M_L^{(r)}=a^{2q^r\frac{q^m-1}{q-1}}(\det B_{11})^2\det C\in\mathbb F_q,\]
where
\[\Big(a^{2q^r\frac{q^m-1}{q-1}}(\det B_{11})^2\Big)^\frac{q-1}2=a^{q^r(q^m-1)}(\det B_{11})^{q-1}=(-1)^{r-1},\]
so
\[\eta\Big(a^{2q^r\frac{q^m-1}{q-1}}\det M_L^{(r)}\Big)=(-1)^{r-1}\eta(\det C)=(-1)^{n-1}\epsilon,\]
as desired.
\end{proof}

\begin{example}
We classify all those quadratic forms of $\mathbb F_{q^4}/\mathbb F_q$ as follows (the results for $\mathbb F_{q^2}/\mathbb F_q$ and $\mathbb F_{q^3}/\mathbb F_q$ can be found in \cite[Section 3]{chen2023evaluation}). Let $\ell(x)=2ax^{q^2}+2bx^q+cx$ and $L(x)=\frac{\ell(x)+\ell^\prime(x)}2=(bx)^{q^3}+\big(a^{q^2}+a\big)x^{q^2}+bx^q+cx$ be nonzero polynomials over $\mathbb F_{q^4}$, so that
\[M_L=\begin{pmatrix}c&b&a^{q^2}+a&b^{q^3}\\b&c^q&b^q&a^{q^3}+a^q\\a^{q^2}+a&b^q&c^{q^2}&b^{q^2}\\b^{q^3}&a^{q^3}+a^q&b^{q^2}&c^{q^3}\end{pmatrix}.\]
Then
\begin{itemize}
\item $\rank M_L=1$ if and only if
\[\begin{vmatrix}c&b\\b&c^q\end{vmatrix}=\begin{vmatrix}b&a^{q^2}+a\\c^q&b^q\end{vmatrix}=\begin{vmatrix}a^{q^2}+a&b^{q^3}\\b^q&a^{q^3}+a^q\end{vmatrix}=0,\]
in which case $\epsilon=1$ if and only if $c^\frac{q+1}2=b$;
\item $\rank M_L=2$ if and only if $\begin{vmatrix}c&b\\b&c^q\end{vmatrix}\ne0$ and
\[\begin{vmatrix}c&b&a^{q^2}+a\\b&c^q&b^q\\a^{q^2}+a&b^q&c^{q^2}\end{vmatrix}=\begin{vmatrix}b&a^{q^2}+a&b^{q^3}\\c^q&b^q&a^{q^3}+a^q\\b^q&c^{q^2}&b^{q^2}\end{vmatrix}=0,\]
in which case $\epsilon=1$ if and only if
\[\begin{vmatrix}c&b\\b&c^q\end{vmatrix}^\frac{q+1}2=\begin{vmatrix}b&c^q\\a^{q^2}+a&b^q\end{vmatrix};\]
\item $\rank M_L=3$ if and only if
\[\begin{vmatrix}c&b&a^{q^2}+a\\b&c^q&b^q\\a^{q^2}+a&b^q&c^{q^2}\end{vmatrix}\ne0\]
and $\det M_L=0$, in which case $\epsilon=1$ if and only if
\[\begin{vmatrix}c&b&a^{q^2}+a\\b&c^q&b^q\\a^{q^2}+a&b^q&c^{q^2}\end{vmatrix}^\frac{q+1}2=\begin{vmatrix}b&c^q&b^q\\a^{q^2}+a&b^q&c^{q^2}\\b^{q^3}&a^{q^3}+a^q&b^{q^2}\end{vmatrix};\]
\item $\rank M_L=4$ if and only if $\det M_L\ne0$, in which case $\epsilon=-\eta(\det M_L)$.
\end{itemize}
\end{example}

\section{Quadratic forms represented by polynomials with coefficients in $\mathbb F_q$}\label{base}

For $L=\frac{\ell+\ell^\prime}2$, consider the special case that $L$ has all coefficients in $\mathbb F_q$. Some Artin-Schreier curves induced from this type of polynomials have been studied in, e.g., \cite[Theorem 5.3]{anbar2014quadratic} and \cite[Theorem 3.1]{oliveira2023artin}, with some restrictions such as $\gcd(n,q)=1$. In fact, the matrix $M_L$ is a circulant matrix that can be easily diagonalized, provided that there exists a primitive $n$-th root of unity over $\mathbb F_q$. However, this is not necessarily the case. In this section, we will address the problem, providing a complete characterization of all those quadratic forms.

For a polynomial $f$ in the polynomial ring $\mathbb C[z]$ written as
\[f(z)=\sum_{i=0}^{n-1}c_iz^i,\]
let $C_f$ be the circulant matrix defined by
\[C_f=\begin{pmatrix}c_0&c_1&\cdots&c_{n-1}\\c_{n-1}&c_0&\cdots&c_{n-2}\\\vdots&\vdots&\ddots&\vdots\\c_1&c_2&\cdots&c_0\end{pmatrix}.\]
The rank of $C_f$ and the determinant of some submatrix of it can be determined as follows.

\begin{lemma}\label{zeta}
Let $\zeta_1,\dots,\zeta_n$ be the distinct $n$-th roots of unity in $\mathbb C$. Let $f$ and $g$ be polynomials over $\mathbb C$ such that $\prod_{j=1}^rg(\zeta_j)\ne0$ and
\[f(z)=\sum_{i=0}^{n-1}c_iz^i\equiv g(z)\prod_{i=r+1}^n(z-\zeta_i)\pmod{z^n-1}\]
with $0<r\le n$. Then $\rank C_f=r$ and $\det C_f^{(r)}=\prod_{j=1}^r\zeta_j^{-r}g(\zeta_j)$.
\end{lemma}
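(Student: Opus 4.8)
The plan is to diagonalize the circulant $C_f$ by the roots of unity and read off the rank from its eigenvalues, then handle the leading principal minor by a Cauchy--Binet reduction. Recall the standard fact that every circulant is simultaneously diagonalized in this way: since the $k$-th row of $C_f$ is the cyclic shift of its first row, the vector $v_j=(1,\zeta_j,\dots,\zeta_j^{n-1})^{\mathrm T}$ satisfies $C_fv_j=f(\zeta_j)v_j$, so the eigenvalues of $C_f$ are exactly $f(\zeta_1),\dots,f(\zeta_n)$. Evaluating the given congruence at each root of unity (where $z^n-1$ vanishes) yields $f(\zeta_j)=g(\zeta_j)\prod_{i=r+1}^n(\zeta_j-\zeta_i)$. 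For $j>r$ this product contains the vanishing factor $\zeta_j-\zeta_j$, while for $j\le r$ every factor $\zeta_j-\zeta_i$ with $i>r$ is nonzero and $g(\zeta_j)\ne0$ by hypothesis. Hence precisely the first $r$ eigenvalues are nonzero, giving $\rank C_f=r$.

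For the determinant I would work with the Vandermonde matrix $V$ having $V_{ij}=\zeta_i^{j-1}$, whose transpose has the eigenvectors as columns, so that $C_f=V^{\mathrm T}\Lambda(V^{\mathrm T})^{-1}$ with $\Lambda=\operatorname{diag}(f(\zeta_1),\dots,f(\zeta_n))$. Because $\Lambda$ is supported on its first $r$ diagonal entries, I would factor $C_f=G\,\Lambda_r\,H$, where $G$ consists of the first $r$ columns of $V^{\mathrm T}$, the matrix $\Lambda_r=\operatorname{diag}(f(\zeta_1),\dots,f(\zeta_r))$ is nonsingular, and $H$ consists of the first $r$ rows of $(V^{\mathrm T})^{-1}$. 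Restricting to the leading block gives $C_f^{(r)}=G_r\Lambda_rH_r$, a product of three $r\times r$ matrices, whence $\det C_f^{(r)}=\det G_r\cdot\prod_{j=1}^rf(\zeta_j)\cdot\det H_r$. Here $G_r$ is the Vandermonde block in the nodes $\zeta_1,\dots,\zeta_r$, with $\det G_r=\prod_{1\le i<j\le r}(\zeta_j-\zeta_i)$, and $H_r$ is (the transpose of) the top-left $r\times r$ block of $V^{-1}$. That all three factors are nonzero already records the nontrivial fact that the rank is witnessed on the \emph{leading} minor.

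The one genuinely nontrivial ingredient is $\det H_r$, the leading principal minor of an inverse Vandermonde, which I would evaluate by Jacobi's identity for minors of an inverse: since the row and column index sets coincide, the sign is trivial and $\det H_r=\det V[\{r+1,\dots,n\}]/\det V$, the complementary minor divided by the full determinant. Factoring $\zeta_i^r$ out of each row of the complementary block exhibits it as $\prod_{i>r}\zeta_i^r$ times a Vandermonde in $\zeta_{r+1},\dots,\zeta_n$, and splitting $\det V$ into its three groups of factor-pairs makes the Vandermonde pieces in the denominator of $\det H_r$ match $\det G_r$ and the cross terms $\prod_{j\le r<i}(\zeta_i-\zeta_j)$. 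Multiplying the three determinants, $\det G_r$ cancels directly, and these cross terms cancel against the corresponding factors inside $\prod_{j\le r}f(\zeta_j)=\prod_{j\le r}g(\zeta_j)\prod_{i>r}(\zeta_j-\zeta_i)$ up to the sign $(-1)^{r(n-r)}$, leaving $\det C_f^{(r)}=(-1)^{r(n-r)}\big(\prod_{i=r+1}^n\zeta_i\big)^r\prod_{j=1}^rg(\zeta_j)$.

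I would finish with a short parity computation. Using $\prod_{i=1}^n\zeta_i=(-1)^{n-1}$ rewrites $\big(\prod_{i=r+1}^n\zeta_i\big)^r=(-1)^{r(n-1)}\big(\prod_{j=1}^r\zeta_j\big)^{-r}$, and then $(-1)^{r(n-r)+r(n-1)}=(-1)^{r(2n-r-1)}=1$ because $r(r+1)$ is even; this collapses everything to the asserted value $\det C_f^{(r)}=\prod_{j=1}^r\zeta_j^{-r}g(\zeta_j)$. I expect the main obstacle to be the bookkeeping of the third step: organizing the Vandermonde factor-pairs so that the cancellations are manifest, and tracking the signs from Jacobi's identity and from reversing the cross differences, with the final parity check being the delicate last point where the exponent $r(2n-r-1)$ must be recognized as even.
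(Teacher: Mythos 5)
Your proposal is correct, and its skeleton coincides with the paper's: both diagonalize $C_f$ by the root-of-unity Vandermonde matrix (your $V^{\mathrm T}$ is the paper's $Z$), read off $\rank C_f=r$ from the eigenvalues $f(\zeta_j)$, and reduce $\det C_f^{(r)}$ to the three-factor product $\det G_r\cdot\prod_{j\le r}f(\zeta_j)\cdot\det H_r$ arising from the leading $r\times r$ block of $Z\operatorname{diag}(f(\zeta_1),\dots,f(\zeta_n))Z^{-1}$. Where you genuinely diverge is in the evaluation of $\det H_r$. The paper never needs Jacobi's identity: it writes the inverse DFT matrix explicitly, $Z^{-1}=\frac1n\big(\zeta_i^{-(j-1)}\big)$, so its version of $H_r$ is itself an explicit Vandermonde matrix in $\zeta_1^{-1},\dots,\zeta_r^{-1}$, and the resulting products of differences are condensed via the derivative identity $\prod_{i\ne j}(\zeta_j-\zeta_i)=n\zeta_j^{-1}$ obtained from $z^n-1$. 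You instead evaluate the leading minor of the inverse by Jacobi's complementary-minor identity (with trivial sign, as you correctly note, since the row and column index sets coincide), factor $\zeta_i^{r}$ out of each row of the complementary block, and cancel Vandermonde pieces against $\det G_r$ and the cross differences hidden in $\prod_{j\le r}f(\zeta_j)$; the roots-of-unity structure then enters only at the very end through $\prod_{i=1}^n\zeta_i=(-1)^{n-1}$ and the parity of $r(2n-r-1)$, both of which you handle correctly (your intermediate formula $\det C_f^{(r)}=(-1)^{r(n-r)}\big(\prod_{i>r}\zeta_i\big)^{r}\prod_{j\le r}g(\zeta_j)$ does collapse to $\prod_{j=1}^r\zeta_j^{-r}g(\zeta_j)$, since $r(r+1)$ is even). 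Your route buys generality: it evaluates the leading minor for an arbitrary set of distinct nodes, not just roots of unity, at the cost of importing Jacobi's identity as an external tool; the paper's computation is more self-contained and elementary, but is tied to the special fact that the inverse of the root-of-unity Vandermonde is again of Vandermonde type.
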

\begin{proof}
Let $Z$ be the $n\times n$ matrix whose $(i,j)$ entry is $\zeta_j^{i-1}$. A simple investigation yields
\[Z^{-1}=\frac1n\begin{pmatrix}1&\zeta_1^{-1}&\cdots&\zeta_1^{-(n-1)}\\1&\zeta_2^{-1}&\cdots&\zeta_2^{-(n-1)}\\\vdots&\vdots&\ddots&\vdots\\1&\zeta_n^{-1}&\cdots&\zeta_n^{-(n-1)}\end{pmatrix}.\]
Note that
\[\begin{split}C_fZ&=\begin{pmatrix}c_0&c_1&\cdots&c_{n-1}\\c_{n-1}&c_0&\cdots&c_{n-2}\\\vdots&\vdots&\ddots&\vdots\\c_1&c_2&\cdots&c_0\end{pmatrix}\begin{pmatrix}1&1&\cdots&1\\\zeta_1&\zeta_2&\cdots&\zeta_n\\\vdots&\vdots&\ddots&\vdots\\\zeta_1^{n-1}&\zeta_2^{n-1}&\cdots&\zeta_n^{n-1}\end{pmatrix}\\&=\begin{pmatrix}f(\zeta_1)&f(\zeta_2)&\cdots&f(\zeta_n)\\\zeta_1f(\zeta_1)&\zeta_2f(\zeta_2)&\cdots&\zeta_nf(\zeta_n)\\\vdots&\vdots&\ddots&\vdots\\\zeta_1^{n-1}f(\zeta_1)&\zeta_2^{n-1}f(\zeta_2)&\cdots&\zeta_n^{n-1}f(\zeta_n)\end{pmatrix}\\&=\begin{pmatrix}1&1&\cdots&1\\\zeta_1&\zeta_2&\cdots&\zeta_n\\\vdots&\vdots&\ddots&\vdots\\\zeta_1^{n-1}&\zeta_2^{n-1}&\cdots&\zeta_n^{n-1}\end{pmatrix}\begin{pmatrix}f(\zeta_1)&&&\\&f(\zeta_2)&&\\&&\ddots&\\&&&f(\zeta_n)\end{pmatrix}\\&=ZD,\end{split}\]
where $D$ is the above diagonal matrix. This proves $\rank C_f=r$, as $\prod_{j=1}^rf(\zeta_j)=\prod_{j=1}^rg(\zeta_j)\ne0$.

Partition $D$ into four blocks where the upper-left one is $D^{(r)}$, so that the others are all zero. Since $C_f=ZDZ^{-1}$, partitioning $C_f$, $Z$ and $Z^{-1}$ in the same manner gives
\[\det C_f^{(r)}=\frac1{n^r}\begin{vmatrix}1&1&\cdots&1\\\zeta_1&\zeta_2&\cdots&\zeta_r\\\vdots&\vdots&\ddots&\vdots\\\zeta_1^{r-1}&\zeta_2^{r-1}&\cdots&\zeta_r^{r-1}\end{vmatrix}\begin{vmatrix}1&\zeta_1^{-1}&\cdots&\zeta_1^{-(r-1)}\\1&\zeta_2^{-1}&\cdots&\zeta_2^{-(r-1)}\\\vdots&\vdots&\ddots&\vdots\\1&\zeta_r^{-1}&\cdots&\zeta_r^{-(r-1)}\end{vmatrix}\det D^{(r)}.\]
Calculating the Vandermonde determinants, we get
\[\begin{split}n^r\det C_f^{(r)}&=\prod_{1\le j<i\le r}(\zeta_i-\zeta_j)\prod_{1\le j<i\le r}(\zeta_i^{-1}-\zeta_j^{-1})\prod_{j=1}^rf(\zeta_j)\\&=\prod_{1\le i<j\le r}(\zeta_j-\zeta_i)\prod_{1\le j<i\le r}(\zeta_j-\zeta_i)\zeta_i^{-1}\zeta_j^{-1}\prod_{j=1}^rf(\zeta_j)\\&=\prod_{j=1}^r\prod_{\substack{1\le i\le r\\i\ne j}}(\zeta_j-\zeta_i)\prod_{1\le j<i\le r}\zeta_i^{-1}\zeta_j^{-1}\prod_{j=1}^r\Bigg(g(\zeta_j)\prod_{i=r+1}^n(\zeta_j-\zeta_i)\Bigg)\\&=\prod_{j=1}^r\prod_{\substack{1\le i\le n\\i\ne j}}(\zeta_j-\zeta_i)\prod_{j=1}^r\zeta_j^{1-r}\prod_{j=1}^rg(\zeta_j)\\&=\prod_{j=1}^rn\zeta_j^{-1}\prod_{j=1}^r\zeta_j^{1-r}g(\zeta_j)\\&=n^r\prod_{j=1}^r\zeta_j^{-r}g(\zeta_j),\end{split}\]
where the equality
\[\prod_{\substack{1\le i\le n\\i\ne j}}(\zeta_j-\zeta_i)=n\zeta_j^{-1}\]
can be obtained by taking formal derivatives of both sides of
\[(z-\zeta_j)\prod_{\substack{1\le i\le n\\i\ne j}}(z-\zeta_i)=z^n-1\]
at $\zeta_j$.
\end{proof}

\begin{remark}
In this lemma, $\prod_{j=1}^r\zeta_j^{-r}g(\zeta_j)$ can be calculated from the coefficients of $f$ using the resultant of $\frac{f(z)}{\gcd(f(z),z^n-1)}$ and $\frac{z^n-1}{\gcd(f(z),z^n-1)}$.
\end{remark}

For $L(x)=\sum_{i=0}^{n-1}a_ix^{q^i}$ with all coefficients in $\mathbb F_q$, define
\[L^*(x)=\sum_{i=0}^{n-1}a_ix^i.\]
Here is the finite field version of the preceding result (for fields of even characteristic as well).

\begin{theorem}
Let $\omega_1,\dots,\omega_n$ be the (not necessarily distinct) $n$-th roots of unity over $\mathbb F_q$ such that $x^n-1=\prod_{i=1}^n(x-\omega_i)$. Let $L$ be a nonzero $q$-linear polynomial over $\mathbb F_q$ of degree less than $q^n$ and
\[\gcd(L^*(x),x^n-1)=\prod_{i=r+1}^n(x-\omega_i)\]
with $0<r\le n$, so that
\[L^*(x)\equiv G(x)\prod_{i=r+1}^n(x-\omega_i)\pmod{x^n-1}\]
for some polynomial $G$ over $\mathbb F_q$ (for instance, let $G(x)=\frac{L^*(x)}{\gcd(L^*(x),x^n-1)}$). Then $\rank M_L=r$, and $\det M_L^{(r)}=\prod_{j=1}^r\omega_j^{-r}G(\omega_j)$. In particular, if $r=n$, then $\det M_L=\prod_{j=1}^nL^*(\omega_j)$.
\end{theorem}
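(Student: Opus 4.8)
The plan is to reduce the statement to the circulant lemma (Lemma \ref{zeta}) and then handle separately the case where $x^n-1$ is inseparable. First I would note that since every coefficient $a_i$ lies in $\mathbb F_q$ we have $a_i^{q^j}=a_i$ for all $j$, so each Frobenius twist in $M_L$ is trivial and $M_L$ collapses to the circulant matrix $C_{L^*}$ with first row $(a_0,\dots,a_{n-1})$. Thus the theorem is precisely the finite-field incarnation of Lemma \ref{zeta}, with $f$ replaced by $L^*$ and the $\zeta_j$ by the $\omega_j$, and the entire task is to check that the proof of that lemma survives over $\overline{\mathbb F_q}$, where the relevant roots of unity live.

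For the rank I would argue intrinsically, avoiding diagonalization altogether. Identifying $C_{L^*}$ with the transpose of the operator ``multiplication by $L^*$'' on the algebra $\mathbb F_q[z]/(z^n-1)$, its image is the ideal generated by $d:=\gcd(L^*,z^n-1)$; since the quotient $\mathbb F_q[z]/(d)$ has dimension $\deg d=n-r$, the image has dimension $r$, so $\rank M_L=r$. (Equivalently, one may combine Theorem \ref{rank} with the classical fact that $\dim_{\mathbb F_q}\ker L=\deg\gcd(L^*(x),x^n-1)$.) This step is characteristic-free and needs no hypothesis on $\gcd(n,q)$.

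The determinant is where the real work lies, and I would split on separability. When $\gcd(n,q)=1$ the polynomial $x^n-1$ has $n$ distinct roots over $\overline{\mathbb F_q}$, so the argument of Lemma \ref{zeta}---the identity $C_fZ=ZD$, the inversion of the Vandermonde $Z$, and the block evaluation of the leading $r\times r$ minor---is purely formal and transcribes word for word, giving $\det M_L^{(r)}=\prod_{j=1}^r\omega_j^{-r}G(\omega_j)$ as an identity in $\overline{\mathbb F_q}$; since $M_L^{(r)}$ is defined over $\mathbb F_q$, this value automatically lies in $\mathbb F_q$. The hard part is the inseparable case $\gcd(n,q)=p^s>1$, where $x^n-1=(x^m-1)^{p^s}$ has repeated roots, the Vandermonde $Z$ is singular, and the diagonalization underlying Lemma \ref{zeta} fails. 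Here I would replace it by the primary decomposition $\overline{\mathbb F_q}[z]/(z^n-1)\cong\prod_k\overline{\mathbb F_q}[z]/\bigl((z-\mu_k)^{p^s}\bigr)$ over the distinct $m$-th roots of unity $\mu_k$, on each factor of which multiplication by $L^*$ is a lower-triangular Toeplitz matrix with diagonal entry $L^*(\mu_k)$ and lower diagonals the coefficients of $L^*$ expanded in powers of $z-\mu_k$. A confluent Vandermonde conjugates $C_{L^*}$ into this block form, and re-running the block-minor computation should show that each $\mu_k$ contributes $G(\mu_k)$ to the leading minor with precisely its multiplicity in $h:=(x^n-1)/d$, reproducing $\prod_{j=1}^r\omega_j^{-r}G(\omega_j)$.

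I expect the bookkeeping of these multiplicities to be the main obstacle. A convenient way to organize it is the reformulation $\prod_{j=1}^r\omega_j^{-r}G(\omega_j)=d(0)^r\operatorname{Res}(h,G)$, valid because $h$ is monic with $h(0)d(0)=-1$ and $r(r+1)$ is even; this exposes the target as a polynomial identity in the coefficients of $G$, to be checked using the coprimality $\gcd(G,h)=1$ and the splitting $\overline{\mathbb F_q}[z]/(z^n-1)=V\oplus\ker(\cdot L^*)$ with $V=\operatorname{span}(1,\dots,z^{r-1})$. Finally the case $r=n$ is immediate: then $d=1$, $G=L^*$, and the product of all $n$-th roots of unity equals $(-1)^{n+1}$, so $\prod_{j=1}^n\omega_j^{-n}=\bigl((-1)^{n+1}\bigr)^{-n}=1$ and $\det M_L=\prod_{j=1}^nL^*(\omega_j)$.
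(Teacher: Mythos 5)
Your reduction to circulants, your intrinsic rank argument (the image of multiplication by $L^*$ on $\mathbb F_q[z]/(z^n-1)$ is the ideal generated by $d=\gcd(L^*,z^n-1)$, of dimension $r$), and your separable case are all correct; indeed the rank argument is cleaner than what the paper does. But the inseparable case $p\mid n$ is not a side case --- it is exactly the situation the section is written to handle, as the paper says explicitly --- and there your argument is a sketch with the decisive step missing. Concretely: the minor evaluation in Lemma \ref{zeta} rests on $C_f=ZDZ^{-1}$ with $D$ \emph{diagonal} and all zero eigenvalues segregated into the complementary block, which kills the cross terms and yields $\det C_f^{(r)}=\det Z_{11}\det D^{(r)}\det\bigl((Z^{-1})_{11}\bigr)$. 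In your confluent replacement, the analogue of $D$ is block lower-triangular Toeplitz, and a primary component at $\mu_k$ whose multiplicity in $d$ lies strictly between $0$ and $p^s$ gives a block that is neither invertible nor zero; such a $\mu_k$ then occurs among both $\omega_1,\dots,\omega_r$ and $\omega_{r+1},\dots,\omega_n$, so no ordering of the confluent Vandermonde columns reproduces the clean block splitting. Making your plan work would require a genuine Cauchy--Binet computation against minors of a confluent Vandermonde (Hermite-interpolation determinants), which you defer with ``should show'' and ``to be checked''; and your reformulation $\prod_{j=1}^r\omega_j^{-r}G(\omega_j)=d(0)^r\operatorname{Res}(h,G)$, while correct, merely restates the target.

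The paper sidesteps the confluent analysis entirely by lifting to characteristic zero: it realizes $\mathbb F_q$ as a residue field $\mathcal O_K/P$ of the $(q-1)$-st cyclotomic field $K$, lifts the congruence $L^*(x)\equiv G(x)\prod_{i>r}(x-\omega_i)\pmod{x^n-1}$ to an identity $f(z)=g(z)\prod_{i>r}(z-\zeta_i)+h(z)(z^n-1)$ over $\mathcal O_K[\zeta]\subset\mathbb C$, where the $n$-th roots of unity $\zeta_i$ are \emph{distinct} so that Lemma \ref{zeta} applies verbatim, and then pushes the determinant identity through the reduction map $\sigma$ (with $\sigma(\zeta_i)=\omega_i$, the $n$ complex roots of unity mapping onto the $\nu$-th roots of unity with multiplicity $p^e$ where $n=p^e\nu$). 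The rank then comes for free: reduction can only decrease rank, so $\rank M_L\le\rank C_f=r$, while $\det M_L^{(r)}=\prod_j\omega_j^{-r}G(\omega_j)\ne0$ forces equality. To repair your proof, either carry out the confluent minor computation in full, or adopt a lifting argument of this kind; as written, the case on which the theorem's novelty rests is unproven.
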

\begin{proof}
Let $n=p^e\nu$ with $\gcd(\nu,p)=1$, where $p$ is the characteristic of $\mathbb F_q$. Denote by $K$ the $(q-1)$-st cyclotomic field with ring of integers $\mathcal O_K$, and let $E=K(\zeta)$ for a primitive $n$-th root of unity $\zeta$ in $\mathbb C$. The ring of integers of $E$ is $\mathcal O_E=\mathcal O_K[\zeta]$. For a prime $P$ of $K$ lying above $p\mathbb Z$, the residue field $\mathcal O_K/P$ has $q$ elements and there is a surjective ring homomorphism $\sigma$ from $\mathcal O_K$ to $\mathbb F_q$ with kernel $P$. Take the minimal polynomial of $\zeta$ over $K$, which has coefficients all in $\mathcal O_K$, and apply $\sigma$ to each of these coefficients to get a polynomial over $\mathbb F_q$. Let $\omega$ be a root of this polynomial, so that $\mathbb F_q(\omega)$ is a fintie extension of $\mathbb F_q$. By setting $\sigma(\zeta)=\omega$, we extend $\sigma$ to a well-defined homomorphism (still denoted by $\sigma$) from $\mathcal O_K[\zeta]$ onto $\mathbb F_q(\omega)$, whose kernel is a prime $\mathfrak P$ of $E$ containing $P$. Note that $\zeta^{p^e}$ is a primitive $\nu$-th root of unity and
\[\prod_{i=1}^{\nu-1}\big(1-\zeta^{p^ei}\big)=\nu\notin\mathfrak P,\]
for otherwise $\nu\in\mathfrak P\cap\mathbb Z=p\mathbb Z$. Then $\omega^i\ne1$ for $1\le i<\nu$ since $(1-\omega^i)^{p^e}=\sigma\big(1-\zeta^{p^ei}\big)\ne0$. Furthermore, $\omega^\nu=1$ as $(\omega^{\nu})^{p^e}=\sigma(\zeta)^n=1$, so $\omega$ is a primitive $\nu$-th root of unity over $\mathbb F_q$. We conclude that $\sigma$ maps the group of $n$-th roots of unity in $\mathbb C$ onto the group of $\nu$-th roots of unity over $\mathbb F_q$, and thus
\[x^n-1=\prod_{i=1}^n(x-\sigma(\zeta_i)),\]
where $\zeta_1,\dots,\zeta_n$ are distinct $n$-th roots of unity in $\mathbb C$ such that $\sigma(\zeta_i)=\omega_i$ for $1\le i\le n$.

In what follows, for a polynomial $f$ over $\mathcal O_K$, denote by $\bar f$ the polynomial over $\mathbb F_q$ obtained by applying $\sigma$ to each coefficient of $f$. Then $G=\bar g$ for some polynomials $g$ over $\mathcal O_K$, and
\[L^*(x)=\bar g(x)\prod_{i=r+1}^n(x-\omega_i)+\bar h(x)(x^n-1)\]
for some polynomials $h$ over $\mathcal O_K$, as $x^n-1$ divides $L^*(x)-G(x)\prod_{i=r+1}^n(x-\omega_i)$ in $\mathbb F_q[x]$. Let
\[f(z)=g(z)\prod_{i=r+1}^n(z-\zeta_i)+h(z)(z^n-1),\]
so that $\bar f=L^*$. Here $\prod_{j=1}^rg(\zeta_j)\ne0$; otherwise $\prod_{j=1}^r\bar g(\omega_j)=0$, but
\[\begin{split}\gcd(L^*(x),x^n-1)&=\gcd\Bigg(\bar g(x)\prod_{i=r+1}^n(x-\omega_i),x^n-1\Bigg)\\&=\gcd\Bigg(\bar g(x),\prod_{i=1}^r(x-\omega_i)\Bigg)\prod_{i=r+1}^n(x-\omega_i).\end{split}\]
It follows from Lemma \ref{zeta} that
\[\det C_f^{(r)}=\prod_{j=1}^r\zeta_j^{-r}g(\zeta_j).\]
Calculating the determinants of matrices by definition, one gets
\[\det M_L^{(r)}=\sigma(\det C_f^{(r)})=\prod_{j=1}^r\sigma(\zeta_j^{-r}g(\zeta_j))=\prod_{j=1}^r\omega_j^{-r}\bar g(\omega_j),\]
and
\[\det M_L=\prod_{j=1}^n\omega_j^{-n}G(\omega_j)=\prod_{j=1}^nL^*(\omega_j)\]
when $r=n$. Moreover, every singular submatrix (of zero determinant) of $C_f$ induces a singular submatrix of $M_L$, so $\rank M_L\le\rank C_f=r$. This implies $\rank M_L=r$.
\end{proof}

\begin{theorem}\label{e}
Continue with the notation in the last theorem, and suppose that $L=\frac{\ell+\ell^\prime}2$ be nonzero with all coefficients in $\mathbb F_q$. Then
\begin{enumerate}
\item\label{e_1}$\epsilon=1$ if and only if $\prod_{j=1}^rG(\omega_j)^\frac{q-1}2=(-1)^{r-1}c^{\frac{q-1}2r+1}$, where $c$ is the constant term of $\gcd(L^*(x),x^n-1)$ satisfying $c^2=1$;
\item\label{e_2}$\epsilon=(-1)^{(n-1-\mu)(\frac{q-1}2r+1)}\eta\big(\prod_{j=1}^rG(\omega_j)\big)$,
where $\mu$ is the multiplicity of $-1$ as a root of $\gcd(L^*(x),x^n-1)$;
\item $\epsilon=(-1)^{n-1}\eta\big(\prod_{j=1}^nL^*(\omega_j)\big)$, when $r=n$;
\item\label{e_4}$\epsilon=(-1)^{(n-1)(\frac{q-1}2m+1)}\eta(2a)^n$, when $r=n-2m<n$ and $\ell$ has degree $q^m$ with leading coefficient $a$.
\end{enumerate}
\end{theorem}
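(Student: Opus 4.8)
The plan is to reduce everything to sign bookkeeping by combining Theorem~\ref{det} with the explicit evaluations $\det M_L^{(r)}=\prod_{j=1}^r\omega_j^{-r}G(\omega_j)$ and $\det M_L=\prod_{j=1}^nL^*(\omega_j)$ from the last theorem. Two preliminary facts drive the argument. First, applying the prime operation twice is the identity, so $L=L^\prime$, and for coefficients in $\mathbb F_q$ this is precisely the symmetry $a_i=a_{n-i}$; equivalently $x^nL^*(1/x)\equiv L^*(x)\pmod{x^n-1}$, so $\gcd(L^*(x),x^n-1)$ is self-reciprocal and its roots pair up under $\omega\mapsto\omega^{-1}$ with equal multiplicity. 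Hence $\prod_{i=r+1}^n\omega_i=(-1)^\mu$ (off-diagonal pairs contribute $1$, the roots $\pm1$ contribute $1$ and $(-1)^\mu$), its constant term is $c=(-1)^{n-r+\mu}$ with $c^2=1$, and $\prod_{i=1}^n\omega_i=(-1)^{n-1}$ gives $W:=\prod_{j=1}^r\omega_j=(-1)^{r-1}c$. Second, I record the shifted-block companion of Lemma~\ref{zeta}: running the same Cauchy--Binet/Vandermonde computation on rows $2,\dots,r+1$ of $C_f$ (only the index set $\{1,\dots,r\}$ survives, since $f(\zeta_k)=0$ for $k>r$) introduces one extra factor $\prod_{k=1}^r\zeta_k$, so the determinant of the submatrix of $C_f$ on rows $2,\dots,r+1$ and columns $1,\dots,r$ equals $\prod_{k=1}^r\zeta_k\det C_f^{(r)}$; applying $\sigma$ as in the last theorem yields $\det M_L^{(r,1)}=W\det M_L^{(r)}$.

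For part~\ref{e_1}, part~\ref{det_1} of Theorem~\ref{det} says $\epsilon=1$ iff $\big(\det M_L^{(r)}\big)^{(q+1)/2}=\det M_L^{(r,1)}$; dividing by $\det M_L^{(r)}\ne0$ and inserting the shifted-block identity turns this into $\big(\det M_L^{(r)}\big)^{(q-1)/2}=W$. Writing $\det M_L^{(r)}=W^{-r}\prod_{j=1}^rG(\omega_j)$ rewrites it as $\big(\prod_{j=1}^rG(\omega_j)\big)^{(q-1)/2}=W^{1+r(q-1)/2}$, and $W=(-1)^{r-1}c$ gives the stated form once one notes $(-1)^{(r-1)\,r(q-1)/2}=1$: indeed $(r-1)r$ is even, so multiplying by the integer $(q-1)/2$ keeps the exponent even. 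Part~\ref{e_2} is then immediate, because $\epsilon$, $\big(\prod_jG(\omega_j)\big)^{(q-1)/2}=\eta\big(\prod_jG(\omega_j)\big)$, and $(-1)^{r-1}c^{r(q-1)/2+1}$ all lie in $\{\pm1\}$, so $\epsilon$ equals the product of the latter two; inserting $c=(-1)^{n-r+\mu}$ and discarding the even part of the exponent (again via $(r-1)r(q-1)/2$ even, noting $2\mu\equiv0$) collapses it to $(-1)^{(n-1-\mu)(r(q-1)/2+1)}\eta\big(\prod_jG(\omega_j)\big)$. The third assertion is simply the case $r=n$ of Theorem~\ref{det} combined with $\det M_L=\prod_{j=1}^nL^*(\omega_j)$.

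The substantive case is~\ref{e_4}. The hypotheses force $a_i=0$ for $m<i<n-m$ and $a_m=a_{n-m}=a/2\in\mathbb F_q$, so $L^*(x)=a_0+\sum_{i=1}^ma_i(x^i+x^{n-i})$. I would introduce the palindromic polynomial $\Psi(x):=x^mL^*(x)\bmod(x^n-1)=a_0x^m+\sum_{i=1}^ma_i(x^{m+i}+x^{m-i})$ of degree $2m$ with leading and constant coefficient $a/2$. Since $\gcd(L^*,x^n-1)$ divides both $x^mL^*$ and $x^n-1$ it divides $\Psi$, and both have degree $2m=n-r$, whence $\Psi=\tfrac a2\gcd(L^*,x^n-1)$. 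Comparing constant terms gives $c=1$, so $\mu$ is even and $W=(-1)^{n-1}$; moreover for $j\le r$ one reads off $G(\omega_j)=L^*(\omega_j)/\gcd(\omega_j)=\tfrac a2\omega_j^{-m}$, so $\prod_{j=1}^rG(\omega_j)=(a/2)^rW^{-m}=(a/2)^r(-1)^{(n-1)m}$. Feeding this into part~\ref{e_2} with $\mu$ even, using $\eta(a/2)=\eta(2a)$, $\eta(2a)^{2m}=1$ (so $\eta(2a)^r=\eta(2a)^n$), and $r+m=n-m$, one is left to check that the accumulated sign exponent differs from $(n-1)(m(q-1)/2+1)$ by $(n-1)r(q-1)/2$, which is even because $r\equiv n\pmod2$ makes $(n-1)r$ even; this yields~\ref{e_4}.

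The main obstacle is not any single hard estimate but the reliable management of the $\pm1$ arithmetic: proving the shifted-block identity $\det M_L^{(r,1)}=W\det M_L^{(r)}$ cleanly, justifying the self-reciprocity of $\gcd(L^*,x^n-1)$ through the non-squarefree case $p\mid n$ (so that $\prod_{i=r+1}^n\omega_i=(-1)^\mu$ holds with multiplicities), and---most importantly for~\ref{e_4}---recognising the palindromic identity $\Psi=\tfrac a2\gcd(L^*,x^n-1)$, which is what makes each $G(\omega_j)$ explicitly $\tfrac a2\omega_j^{-m}$. Every remaining discrepancy reduces to the single observation that $(r-1)r(q-1)/2$ and $(n-1)r(q-1)/2$ are even.
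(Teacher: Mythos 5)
Your proposal is correct and takes essentially the same route as the paper: self-reciprocity of $\gcd(L^*(x),x^n-1)$ yielding $c^2=1$ and $\prod_{j=1}^r\omega_j=(-1)^{r-1}c$, the shifted-determinant identity $\det M_L^{(r,1)}=\big(\prod_{j=1}^r\omega_j\big)\det M_L^{(r)}$ fed into part \ref{det_1} of Theorem \ref{det} for \ref{e_1} and \ref{e_2}, and for \ref{e_4} exactly the paper's palindromic identity $x^mL^*(x)\equiv\frac a2\prod_{i=r+1}^n(x-\omega_i)\pmod{x^n-1}$, which forces $G(\omega_j)=\frac a2\omega_j^{-m}$. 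The only cosmetic differences are that the paper obtains the shifted determinant by applying the preceding theorem to $L_1(x)\equiv L(x)^q\pmod{x^{q^n}-x}$ (noting $M_{L_1}^{(r)}=M_L^{(r,1)}$) instead of redoing the Vandermonde/Cauchy--Binet computation on rows $2,\dots,r+1$, and that it deduces \ref{e_4} directly from part \ref{det_3} of Theorem \ref{det} rather than routing through \ref{e_2} with $\mu$ even --- equivalent sign bookkeeping in both cases.
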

\begin{proof}
The first three statements are apparent in the case $r=n$. Suppose that $0<r<n$ and let
\[L^*(x)=G(x)\prod_{i=r+1}^n(x-\omega_i)+H(x)(x^n-1)\]
for some polynomial $H$ over $\mathbb F_q$. Let $k$ be a sufficiently large integer such that $x^{k-n+r}G(x^{-1})$ and $x^{k-n}H(x^{-1})$ are both polynomials in $\mathbb F_q[x]$. Then
\[\begin{split}x^kL^*(x^{-1})&=x^kG(x^{-1})\prod_{i=r+1}^n(x^{-1}-\omega_i)+x^kH(x^{-1})(x^{-n}-1)\\&=x^{k-n+r}G(x^{-1})\prod_{i=r+1}^n\omega_i(\omega_i^{-1}-x)+x^{k-n}H(x^{-1})(1-x^n),\end{split}\]
which means
\begin{equation}\label{eq9}x^kL^*(x^{-1})\equiv x^{k-n+r}G(x^{-1})\prod_{i=r+1}^n\omega_i(\omega_i^{-1}-x)\pmod{x^n-1}.\end{equation}
For $L(x)=\sum_{i=0}^{n-1}a_ix^{q^i}$, one has
\[x^kL^*(x^{-1})=\sum_{i=0}^{n-1}a_ix^{k-i}\equiv\sum_{i=0}^{n-1}a_ix^{k+n-i}\equiv x^kL^{\prime*}(x)\pmod{x^n-1},\]
so
\[\gcd(x^kL^*(x^{-1}),x^n-1)=\gcd(x^kL^*(x),x^n-1)=\prod_{i=r+1}^n(x-\omega_i)\]
as $L=L^\prime$. It follows from \eqref{eq9} that $\prod_{i=r+1}^n(x-\omega_i^{-1})$ divides $x^kL^*(x^{-1})$, and consequently it divides $\prod_{i=r+1}^n(x-\omega_i)$. Then
\[\prod_{i=r+1}^n(x-\omega_i)=\prod_{i=r+1}^n(x-\omega_i^{-1}),\]
where $\omega_i$ and $\omega_i^{-1}$ have the same multiplicity. Hence, $(-1)^\mu\prod_{j=1}^r\omega_j=\prod_{j=1}^n\omega_j=(-1)^{n-1}$. Moreover,
\[c=(-1)^{n-r}\prod_{i=r+1}^n\omega_i=(-1)^{n-r}\prod_{i=r+1}^n\omega_i^{-1}=c^{-1},\]
and thus $\prod_{j=1}^r\omega_j=(-1)^{r-1}c$ with $c^2=1$.

Let $L_1$ be the polynomial of degree less than $q^n$ with
\[L_1(x)\equiv L(x)^q\pmod{x^{q^n}-x},\]
so that
\[L_1^*(x)\equiv xL^*(x)\equiv xG(x)\prod_{i=r+1}^n(x-\omega_i)\pmod{x^n-1}.\]
Since
\[\gcd(L_1^*(x),x^n-1)=\gcd(xL^*(x),x^n-1)=\prod_{i=r+1}^n(x-\omega_i),\]
the last theorem can be applied to get
\[\det M_{L_1}^{(r)}=\prod_{j=1}^r\omega_j^{1-r}G(\omega_j).\]
Observe that the submatrix of $M_L$ defined in \ref{det_1} of Theorem \ref{det} is, in this case, exactly $M_{L_1}^{(r)}$. Then $\epsilon=1$ if and only if
\[\prod_{j=1}^r(\omega_j^{-r}G(\omega_j))^\frac{q+1}2=\prod_{j=1}^r\omega_j^{1-r}G(\omega_j);\]
that is,
\[\prod_{j=1}^rG(\omega_j)^\frac{q-1}2=\prod_{j=1}^r\omega_j^{\frac{q-1}2r+1}.\]
The results \ref{e_1} and \ref{e_2} can be easily verified.

Finally, by the assumption of \ref{e_4} we have
\[x^mL^*(x)\equiv\frac a2x^{2m}+\dots+\frac a2\pmod{x^n-1},\]
where the polynomial on the right side has degree $2m$, while
\[\gcd(x^mL^*(x),x^n-1)=\gcd(L^*(x),x^n-1)=\prod_{i=r+1}^n(x-\omega_i)\]
has degree $n-r=2m$. Therefore,
\[x^{2m}+\dots+1=\prod_{i=r+1}^n(x-\omega_i),\]
which implies $\prod_{j=1}^r\omega_j=(-1)^{r-1}c=(-1)^{n-1}$ and
\[L^*(x)\equiv x^{n-m}x^mL^*(x)\equiv\frac a2x^{n-m}\prod_{i=r+1}^n(x-\omega_i)\pmod{x^n-1}.\]
Then
\[\begin{split}\epsilon&=(-1)^{n-1}\eta\Big(\det M_L^{(r)}\Big)\\&=(-1)^{n-1}\eta\Bigg(\prod_{j=1}^r\frac a2\omega_j^m\Bigg)\\&=(-1)^{n-1}\eta\Big((-1)^{(n-1)m}\Big(\frac a2\Big)^r\Big)\\&=(-1)^{(n-1)(\frac{q-1}2m+1)}\eta(2a)^n,\end{split}\]
as a consequence of \ref{det_3} in Theorem \ref{det}.
\end{proof}

\section{Maximal and minimal Artin-Schreier curves from some quadratic forms}\label{curve}

To find a maximal or minimal curve as in Lemma \ref{maximal}, it is necessary and sufficient to let $n$ be even and $\ell$ have degree $q^m$ such that $\dim_{\mathbb F_q}\ker L=2m$ for $L=\frac{\ell+\ell^\prime}2$. The trivial cases include
\begin{itemize}
\item $\ell(x)=ax^{q^\frac n2}$ with $a\in\mathbb F_{q^n}^*$ such that $a^{q^\frac n2}+a=0$;
\item $\ell(x)=ax$ with $a\in\mathbb F_{q^n}^*$.
\end{itemize}

Using the obtained classification results of quadratic forms, we can derive more such curves explicitly and decide whether they are maximal or minimal, which depends on $(-1)^\frac{(q-1)(n-2m)}4\epsilon$. The case of odd $n$ will be considered as well.

\begin{example}
Consider $\ell(x)=2ax^{q^2}+2bx^q+cx$ with $a\in\mathbb F_q^*$ and $b,c\in\mathbb F_q$, and suppose that $n>4$. For $L=\frac{\ell+\ell^\prime}2$, we have
\[x^2L^*(x)\equiv ax^4+bx^3+cx^2+bx+a\pmod{x^n-1},\]
and $\dim_{\mathbb F_q}\ker L=4$ if and only if $\gcd(ax^4+bx^3+cx^2+bx+a,x^n-1)$ has degree $4$. For an $n$-th root of unity $\omega$ over $\mathbb F_q$, it is clear that $\omega$ and $\omega^{-1}$ have the same multiplicity as roots of $ax^4+bx^3+cx^2+bx+a$. In addition, the multiplicity of $-1$, as well as of $1$, must be even. In order for $a^{-1}(ax^4+bx^3+cx^2+bx+a)$ to divide $x^n-1$, it is necessarily
\begin{itemize}
\item  the product of two polynomials of the form $(x-\omega)(x-\omega^{-1})$ with $\omega\in\mathbb F_q$ or $\omega^{q+1}=1$, or
\item $(x-\omega)(x-\omega^q)(x-\omega^{-1})(x-\omega^{-q})$ with $\omega\in\mathbb F_{q^2}\setminus\mathbb F_q$ such that $\omega^{q+1}\ne1$, or
\item $(x-\omega)(x-\omega^q)\big(x-\omega^{q^2}\big)\big(x-\omega^{q^3}\big)$ with $\omega\in\mathbb F_{q^4}\setminus\mathbb F_{q^2}$.
\end{itemize}
The sufficiency is routine to check. If $\dim_{\mathbb F_q}\ker L=4$, then $\epsilon=(-1)^{n-1}\eta(a)^n$ by \ref{e_4} of Theorem \ref{e}.
\end{example}

Next, we investigate certain binomials that may induce maximal or minimal Artin-Schreier curves.

\begin{proposition}\label{L}
Let $L(x)=(ax)^{q^{n-1}}+ax^q+bx$ for $a,b\in\mathbb F_{q^n}^*$ with $n\ge4$. Define a sequence of polynomials in $\mathbb F_{q^n}[x]$ as
\[f_k(x)=-f_{k-1}(x)-x^{q^{k-2}}f_{k-2}(x)\quad\text{for }k\ge2,\]
with $f_0=1$ and $f_1=-1$. Then $\dim_{\mathbb F_q}\ker L=2$ if and only if
\[f_{n-1}(a^2b^{-1-q})=\mathrm N(a)-\mathrm N(b)f_n(a^2b^{-1-q})=0,\]
if and only if
\[a^{2q^{n-3}}\det M_L^{(n-3)}-b^{q^{n-2}}\det M_L^{(n-2)}=\mathrm N(a)+(-1)^na^{2q^{n-2}}\det M_L^{(n-2)}=0.\]
\end{proposition}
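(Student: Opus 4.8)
The plan is to exploit the very special shape of $M_L$ for this $L$. Writing $L(x)=\sum_i a_ix^{q^i}$, we have $a_0=b$, $a_1=a$, $a_{n-1}=a^{q^{n-1}}$ and all other $a_i=0$; a direct check gives $L=L'$, so Theorems~\ref{rank} and \ref{det} apply, and $M_L$ is the symmetric ``cyclic tridiagonal'' matrix with diagonal $b,b^q,\dots,b^{q^{n-1}}$, off-diagonal entries $a,a^q,\dots,a^{q^{n-2}}$, and both corner entries $(0,n-1)$ and $(n-1,0)$ equal to $a^{q^{n-1}}$. Consequently, for $k<n$ the leading submatrix $M_L^{(k)}$ is genuinely tridiagonal, so $D_k:=\det M_L^{(k)}$ obeys $D_k=b^{q^{k-1}}D_{k-1}-a^{2q^{k-2}}D_{k-2}$ with $D_0=1$, $D_1=b$. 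Moreover the Frobenius map $x\mapsto x^q$ carries $M_L^{(k)}$ to the tridiagonal block on indices $1,\dots,k$, so that block has determinant $D_k^q$.

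First I would settle the equivalence of the two displayed pairs. A straightforward induction gives $D_k=(-1)^k b^{(q^k-1)/(q-1)}f_k(a^2b^{-1-q})$: the initial values agree, and dividing the $D$-recurrence by the partial norm $b^{(q^k-1)/(q-1)}$ turns the coefficient $a^{2q^{k-2}}/b^{q^{k-2}+q^{k-1}}$ into $(a^2b^{-1-q})^{q^{k-2}}$, reproducing the $f$-recurrence up to the sign $(-1)^k$. Writing $x=a^2b^{-1-q}$, this yields $f_{n-1}(x)=0\iff D_{n-1}=0$, and by the recurrence $D_{n-1}=b^{q^{n-2}}D_{n-2}-a^{2q^{n-3}}D_{n-3}$ this is the first equation of the second pair. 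Taking $k=n$ gives $\mathrm N(b)f_n(x)=(-1)^nD_n$ with $D_n=b^{q^{n-1}}D_{n-1}-a^{2q^{n-2}}D_{n-2}$, so once $D_{n-1}=0$ the quantity $\mathrm N(a)-\mathrm N(b)f_n(x)$ becomes $\mathrm N(a)+(-1)^na^{2q^{n-2}}D_{n-2}$, the second equation of the second pair. Thus the two pairs coincide, and it remains to prove $\dim_{\mathbb F_q}\ker L=2\iff$ second pair.

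By Theorem~\ref{rank}, $\dim_{\mathbb F_q}\ker L=2$ is exactly $\rank M_L=n-2$, and I would analyze this by a Schur complement at the leading block $P:=M_L^{(n-2)}$. The trailing $2\times2$ block is $R=\begin{pmatrix}b^{q^{n-2}}&a^{q^{n-2}}\\a^{q^{n-2}}&b^{q^{n-1}}\end{pmatrix}$, and the off-diagonal block $Q$ has only two nonzero entries, $a^{q^{n-3}}$ and $a^{q^{n-1}}$ (from the sub/super-diagonal and the corner). When $D_{n-2}=\det P\neq0$ one has $\rank M_L=(n-2)+\rank\Sigma$ for $\Sigma=R-Q^{\mathrm T}P^{-1}Q$, so $\rank M_L=n-2\iff\Sigma=0$. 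The three entries of $\Sigma$ involve three entries of $P^{-1}$, each a tridiagonal cofactor over $D_{n-2}$: the diagonal cofactors are $D_{n-3}$ and $D_{n-3}^q$, while the off-diagonal one is $(-1)^{n-1}$ times the determinant $a^{1+q+\dots+q^{n-4}}$ of a triangular submatrix (here $n\ge4$ guarantees these submatrices are non-degenerate). A short computation then gives $\Sigma_{11}D_{n-2}=D_{n-1}$ and $\Sigma_{12}=0\iff a^{2q^{n-2}}D_{n-2}=(-1)^{n+1}\mathrm N(a)$, so $\Sigma_{11}=0$ and $\Sigma_{12}=0$ are precisely the two equations of the second pair.

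For the forward direction, $\rank M_L=n-2$ forces $D_{n-2}\neq0$ by Theorem~\ref{det} and $\Sigma=0$, so both equations are read off from $\Sigma_{11}$ and $\Sigma_{12}$. The backward direction carries the only real subtlety, which I expect to be the main obstacle: from the two equations one must recover the full $\Sigma=0$, i.e.\ also the third entry $\Sigma_{22}$. Assuming the equations, $D_{n-2}\neq0$ is automatic (otherwise the second forces $\mathrm N(a)=0$, impossible as $a\neq0$), so $\Sigma$ is defined with $\Sigma_{11}=\Sigma_{12}=0$; writing $\Sigma_{22}D_{n-2}=b^{q^{n-1}}D_{n-2}-a^{2q^{n-1}}D_{n-3}^q$, eliminating $D_{n-3}$ by $D_{n-1}=0$ and substituting $a^{2q^{n-2}}D_{n-2}=(-1)^{n+1}\mathrm N(a)$ while using that $\mathrm N(a)\in\mathbb F_q$ is fixed by Frobenius, one finds $\Sigma_{22}D_{n-2}=0$, hence $\Sigma_{22}=0$. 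Therefore $\Sigma=0$ and $\rank M_L=n-2$, completing the proof.
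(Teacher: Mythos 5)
Your proposal is correct, but it reaches the rank characterization by a genuinely different mechanism than the paper. The paper uses condition (iii) of Theorem \ref{rank}: $\dim_{\mathbb F_q}\ker L=2$ holds iff the first $n-1$ rows of $M_L$ are linearly dependent, and it builds the dependency vector explicitly---normalizing $\beta_0=1$, the first $n-2$ column equations force the three-term recursion $\beta_k=-a^{-q^{k-1}}\big(b^{q^{k-1}}\beta_{k-1}+a^{q^{k-2}}\beta_{k-2}\big)$, and the last two columns yield exactly two terminal conditions, which the induction identity $a^{q^{k-1}+\dots+q+1}\beta_k=b^{q^{k-1}+\dots+q+1}f_k(a^2b^{-1-q})=(-1)^k\det M_L^{(k)}$ (your $D_k=(-1)^kb^{(q^k-1)/(q-1)}f_k(a^2b^{-1-q})$, established from the same tridiagonal expansion) converts into both displayed pairs at once. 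You instead apply block rank additivity at the leading block, which is sound but costs two things the paper never pays: you must secure $D_{n-2}\ne0$ before $\Sigma$ is even defined (you handle this correctly, via Theorem \ref{det} in the forward direction and via $\mathrm N(a)\ne0$ in the backward one), and because $\Sigma$ is symmetric $2\times2$ you face three scalar conditions instead of the paper's two, so you must show $\Sigma_{22}=0$ is redundant---your elimination, raising $D_{n-1}=0$ and $a^{2q^{n-2}}D_{n-2}=(-1)^{n+1}\mathrm N(a)$ to the $q$-th power and using $\mathrm N(a)^q=\mathrm N(a)$, does this correctly, and your cofactor evaluations ($D_{n-3}$, $D_{n-3}^q$ by entrywise Frobenius, and the triangular minor $(-1)^{n-1}a^{1+q+\dots+q^{n-4}}$, with $n\ge4$ ensuring the corner stays clear of the band) all check out. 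What your route buys is a standard Schur-complement framing that makes visible how the Frobenius-twisted symmetry of $M_L$ forces the redundancy of the third condition; what the paper's route buys is that the null-vector recursion produces exactly two conditions with no nondegeneracy hypothesis and no appeal to Theorem \ref{det} at all.
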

\begin{proof}
The kernel of $L$ has at most $q^2$ elements, since
\[L(x)^q\equiv a^qx^{q^2}+b^qx^q+ax\pmod{x^{q^n}-x}.\]
By Theorem \ref{rank}, $\dim_{\mathbb F_q}\ker L=2$ if and only if
\[\begin{pmatrix}\beta_0&\cdots&\beta_{n-2}&0\end{pmatrix}\begin{pmatrix}b&a&&&&a^{q^{n-1}}\\a&b^q&\ddots\\&a^q&\ddots&a^{q^{n-4}}\\&&\ddots&b^{q^{n-3}}&a^{q^{n-3}}\\&&&a^{q^{n-3}}&b^{q^{n-2}}&a^{q^{n-2}}\\a^{q^{n-1}}&&&&a^{q^{n-2}}&b^{q^{n-1}}\end{pmatrix}\]
is zero for some $\beta_0,\dots,\beta_{n-2}\in\mathbb F_{q^n}$ one of which is nonzero. Assume that this is the case. If $\beta_0=0$, then clearly $\beta_1,\dots,\beta_{n-2}$ are all zero. Hence, let $\beta_0=1$ without loss of generality. Now we have
\[\begin{cases}b+a\beta_1=0,\\a^{q^{k-2}}\beta_{k-2}+b^{q^{k-1}}\beta_{k-1}+a^{q^{k-1}}\beta_k=0&\text{for }2\le k\le n-2,\\a^{q^{n-3}}\beta_{n-3}+b^{q^{n-2}}\beta_{n-2}=0,\\a^{q^{n-1}}+a^{q^{n-2}}\beta_{n-2}=0,\end{cases}\]
where $\beta_1,\dots,\beta_{n-2}$ are uniquely determined by all but the last two equations. We conclude that $\dim_{\mathbb F_q}\ker L=2$ only if for the sequence of elements in $\mathbb F_{q^n}$ defined by
\[\beta_k=-a^{-q^{k-1}}\big(b^{q^{k-1}}\beta_{k-1}+a^{q^{k-2}}\beta_{k-2}\big)\quad\text{for }k\ge2,\]
with $\beta_0=1$ and $\beta_1=-a^{-1}b$, one has
\begin{equation}\label{eq0}a^{q^{n-3}}\beta_{n-3}+b^{q^{n-2}}\beta_{n-2}=a^{q^{n-1}}+a^{q^{n-2}}\beta_{n-2}=0.\end{equation}
The converse is apparent.

We show by induction that with $\beta_k$ defined above,
\begin{equation}\label{eq1}a^{q^{k-1}+\dots+q+1}\beta_k=b^{q^{k-1}+\dots+q+1}f_k(a^2b^{-1-q})=(-1)^k\det M_L^{(k)}\end{equation}
for $1\le k\le n-2$. For $k\in\{1,2\}$, the claim is obvious. Let $3\le k\le n-2$ and assume that it holds for $k-1$ and $k-2$. Then
\[\begin{split}&\mathrel{\phantom{=}}a^{q^{k-1}+\dots+q+1}\beta_k\\&=-a^{q^{k-2}+\dots+q+1}\big(b^{q^{k-1}}\beta_{k-1}+a^{q^{k-2}}\beta_{k-2}\big)\\&=-b^{q^{k-1}}a^{q^{k-2}+\dots+q+1}\beta_{k-1}-a^{2q^{k-2}}a^{q^{k-3}+\dots+q+1}\beta_{k-2}\\&=-b^{q^{k-1}+\dots+q+1}f_{k-1}(a^2b^{-1-q})-a^{2q^{k-2}}b^{q^{k-3}+\dots+q+1}f_{k-2}(a^2b^{-1-q})\\&=b^{q^{k-1}+\dots+q+1}\big(-f_{k-1}(a^2b^{-1-q})-(a^2b^{-1-q})^{q^{k-2}}f_{k-2}(a^2b^{-1-q})\big)\\&=b^{q^{k-1}+\dots+q+1}f_k(a^2b^{-1-q}),\end{split}\]
and
\[\begin{split}&\mathrel{\phantom{=}}a^{q^{k-1}+\dots+q+1}\beta_k\\&=-b^{q^{k-1}}a^{q^{k-2}+\dots+q+1}\beta_{k-1}-a^{2q^{k-2}}a^{q^{k-3}+\dots+q+1}\beta_{k-2}\\&=(-1)^kb^{q^{k-1}}\det M_L^{(k-1)}-(-1)^ka^{2q^{k-2}}\det M_L^{(k-2)}\\&=(-1)^k\det M_L^{(k)},\end{split}\]
where the last equality follows from
\[\begin{split}&\mathrel{\phantom{=}}\begin{vmatrix}b&a\\a&b^q&\ddots\\&a^q&\ddots&a^{q^{k-3}}\\&&\ddots&b^{q^{k-2}}&a^{q^{k-2}}\\&&&a^{q^{k-2}}&b^{q^{k-1}}\end{vmatrix}\\&=b^{q^{k-1}}\det M_L^{(k-1)}-a^{q^{k-2}}\begin{vmatrix}b&a\\a&b^q&\ddots\\&a^q&\ddots&a^{q^{k-4}}\\&&\ddots&b^{q^{k-3}}\\&&&a^{q^{k-3}}&a^{q^{k-2}}\end{vmatrix}\\&=b^{q^{k-1}}\det M_L^{(k-1)}-a^{2q^{k-2}}\det M_L^{(k-2)}.\end{split}\]

Finally, by \eqref{eq1} we have
\[\begin{split}&\mathrel{\phantom{=}}a^{q^{n-4}+\dots+q+1}\big(a^{q^{n-3}}\beta_{n-3}+b^{q^{n-2}}\beta_{n-2}\big)\\&=a^{q^{n-3}}b^{q^{n-4}+\dots+q+1}f_{n-3}(a^2b^{-1-q})+a^{-q^{n-3}}b^{q^{n-2}+\dots+q+1}f_{n-2}(a^2b^{-1-q})\\&=a^{-q^{n-3}}b^{q^{n-2}+\dots+q+1}\big((a^2b^{-1-q})^{q^{n-3}}f_{n-3}(a^2b^{-1-q})+f_{n-2}(a^2b^{-1-q})\big)\\&=-a^{-q^{n-3}}b^{q^{n-2}+\dots+q+1}f_{n-1}(a^2b^{-1-q}),\end{split}\]
and
\[\begin{split}&\mathrel{\phantom{=}}a^{q^{n-3}+\dots+q+1}\big(a^{q^{n-3}}\beta_{n-3}+b^{q^{n-2}}\beta_{n-2}\big)\\&=(-1)^{n-1}a^{2q^{n-3}}\det M_L^{(n-3)}+(-1)^nb^{q^{n-2}}\det M_L^{(n-2)},\end{split}\]
with
\[\begin{split}&\mathrel{\phantom{=}}a^{q^{n-2}+\dots+q+1}\big(a^{q^{n-1}}+a^{q^{n-2}}\beta_{n-2}\big)\\&=\mathrm N(a)+\mathrm N(b)(a^2b^{-1-q})^{q^{n-2}}f_{n-2}(a^2b^{-1-q})\\&=\mathrm N(a)+(-1)^na^{2q^{n-2}}\det M_L^{(n-2)}.\end{split}\]
Given the condition in \eqref{eq0}, this completes the proof.
\end{proof}

\begin{corollary}
Continue with the notation in the last proposition, and suppose that $a^2b^{-1-q}\in\mathbb F_q$. Define a sequence of polynomials in $\mathbb F_q[x]$ as
\[g_k(x)=(-1)^k\sum_{i=0}^{\lfloor\frac k2\rfloor}(-1)^i\binom{k-i}ix^i\]
for $k\ge0$. Then $\dim_{\mathbb F_q}\ker L=2$ if and only if
\[g_{n-1}(a^2b^{-1-q})=\mathrm N(a)-\mathrm N(b)g_n(a^2b^{-1-q})=0.\]
\end{corollary}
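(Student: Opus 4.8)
The plan is to reduce the criterion of Proposition \ref{L} to its claimed closed form by showing that, once $a^2b^{-1-q}$ lies in $\mathbb F_q$, the evaluated sequence $f_k(a^2b^{-1-q})$ coincides with $g_k(a^2b^{-1-q})$ for every $k$. Write $t=a^2b^{-1-q}$, and observe that $t\in\mathbb F_q$ forces $t^{q^{k-2}}=t$ for all $k$. Evaluating the defining recursion $f_k(x)=-f_{k-1}(x)-x^{q^{k-2}}f_{k-2}(x)$ at $x=t$ therefore collapses the Frobenius twist, so the elements $f_k(t)$ (which all lie in $\mathbb F_q$) satisfy the constant-coefficient recurrence
\[f_k(t)=-f_{k-1}(t)-t\,f_{k-2}(t)\quad(k\ge2),\qquad f_0(t)=1,\quad f_1(t)=-1.\]

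Next I would verify that the explicit polynomials $g_k$ obey exactly the same recurrence $g_k(x)=-g_{k-1}(x)-xg_{k-2}(x)$ with $g_0=1$ and $g_1=-1$. The initial values follow at once from the defining formula. For the recurrence itself, the idea is to substitute the closed form into $-g_{k-1}(x)-xg_{k-2}(x)$, reindex the second sum by $i\mapsto i-1$, factor out the common sign $(-1)^k$, and compare coefficients of $x^i$: the shared interior terms combine through Pascal's rule $\binom{k-1-i}{i}+\binom{k-1-i}{i-1}=\binom{k-i}{i}$ to produce the coefficient of $g_k$, while the two boundary terms (the $i=0$ term and the top term $i=\lfloor k/2\rfloor$) must be matched separately, splitting into the cases $k$ even and $k$ odd to account for the differing summation ranges $\lfloor(k-1)/2\rfloor$, $\lfloor(k-2)/2\rfloor$, and $\lfloor k/2\rfloor$. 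These $g_k$ are a sign-folded variant of the Fibonacci polynomials, so this verification is routine, though it does demand care with the floor in the upper limit.

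With both sequences satisfying the identical recurrence and identical initial data, a one-line induction yields $f_k(t)=g_k(t)$ for all $k\ge0$. Substituting $t=a^2b^{-1-q}$ into the two defining equations of Proposition \ref{L} then replaces every occurrence of $f_{n-1}$ and $f_n$ by $g_{n-1}$ and $g_n$, giving the stated equivalence. The only genuinely nontrivial step is the coefficient comparison establishing the recurrence for $g_k$; everything else is bookkeeping, and I expect the boundary-term check in that step to be the main place requiring attention.
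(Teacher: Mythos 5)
Your proposal is correct and takes essentially the same approach as the paper: the paper establishes the recurrence $g_k(x)=-g_{k-1}(x)-xg_{k-2}(x)$ by exactly the reindex-and-Pascal computation you outline (with the same parity-split boundary check, the even case resolved by $\binom{k/2}{k/2}-\binom{k/2-1}{k/2-1}=0$), and then concludes $f_k\equiv g_k\pmod{x^q-x}$. The only cosmetic difference is that the paper phrases the collapse of the Frobenius twist as a congruence modulo $x^q-x$, whereas you evaluate directly at $t=a^2b^{-1-q}\in\mathbb F_q$ using $t^{q^{k-2}}=t$; these are the same argument.
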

\begin{proof}
For $k\ge2$, one has
\[\begin{split}&\mathrel{\phantom{=}}(-1)^{k-1}(g_{k-1}(x)+xg_{k-2}(x))\\&=\sum_{i=0}^{\lfloor\frac{k-1}2\rfloor}(-1)^i\binom{k-1-i}ix^i-\sum_{i=0}^{\lfloor\frac{k-2}2\rfloor}(-1)^i\binom{k-2-i}ix^{i+1}\\&=1+\sum_{i=1}^{\lfloor\frac{k-1}2\rfloor}(-1)^i\binom{k-1-i}ix^i+\sum_{i=1}^{\lfloor\frac k2\rfloor}(-1)^i\binom{k-1-i}{i-1}x^i\\&=1+\sum_{i=1}^{\lfloor\frac{k-1}2\rfloor}(-1)^i\binom{k-i}ix^i+\sum_{i=\lfloor\frac{k-1}2\rfloor+1}^{\lfloor\frac k2\rfloor}(-1)^i\binom{k-1-i}{i-1}x^i\\&=(-1)^kg_k(x)-\sum_{i=\lfloor\frac{k-1}2\rfloor+1}^{\lfloor\frac k2\rfloor}\bigg((-1)^i\binom{k-i}ix^i-(-1)^i\binom{k-1-i}{i-1}x^i\bigg).\end{split}\]
If $k$ is odd, then $\lfloor\frac{k-1}2\rfloor=\lfloor\frac k2\rfloor$; otherwise, $\lfloor\frac{k-1}2\rfloor+1=\lfloor\frac k2\rfloor=\frac k2$ and
\[\binom{k-\frac k2}{\frac k2}-\binom{k-1-\frac k2}{\frac k2-1}=1-1=0.\]
Consequently,
\[g_k(x)=-g_{k-1}(x)-xg_{k-2}(x)\quad\text{for }k\ge2\]
with $g_0=1$ and $g_1=-1$ by definition. Note that
\[f_k(x)=-f_{k-1}(x)-x^{q^{k-2}}f_{k-2}(x)\equiv-f_{k-1}(x)-xf_{k-2}(x)\pmod{x^q-x}\]
for $k\ge2$ with $f_0=1$ and $f_1=-1$. Therefore,
\[f_k(x)\equiv g_k(x)\pmod{x^q-x}\]
and $f_k(a^2b^{-1-q})=g_k(a^2b^{-1-q})$.
\end{proof}

\begin{corollary}
Let $\ell(x)=2ax^q+bx$ and $L(x)=(ax)^{q^{n-1}}+ax^q+bx$ for $a,b\in\mathbb F_{q^n}^*$ with $n\ge4$. If $\dim_{\mathbb F_q}\ker L=2$, then $\epsilon=(-1)^{\frac{q+1}2(n-1)}\eta(\mathrm N(a))$.
\end{corollary}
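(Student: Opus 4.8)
The plan is to combine part \ref{det_3} of Theorem \ref{det} with the second vanishing identity supplied by Proposition \ref{L}. First I would record the relevant data: $\ell(x)=2ax^q+bx$ has degree $q$, so $m=1$ with leading coefficient $2a$, and the hypothesis $\dim_{\mathbb F_q}\ker L=2=2m$ forces $r=\dim_{\mathbb F_q}\im L=n-2=n-2m$. This is precisely the regime covered by part \ref{det_3} of Theorem \ref{det}. Applying it with the leading coefficient $2a$ in place of the theorem's $a$, and using $\frac{q^m-1}{q-1}=1$ and $q^r=q^{n-2}$, I obtain
\[\epsilon=(-1)^{n-1}\eta\Big((2a)^{2q^{n-2}}\det M_L^{(n-2)}\Big).\]

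Next I would feed in the determinant condition from Proposition \ref{L}, namely $\mathrm N(a)+(-1)^na^{2q^{n-2}}\det M_L^{(n-2)}=0$, which rearranges to $a^{2q^{n-2}}\det M_L^{(n-2)}=(-1)^{n-1}\mathrm N(a)$. Substituting this into the displayed formula, and using that $2\in\mathbb F_q$ so that $2^{2q^{n-2}}=4$ and $\eta(4)=1$, the argument of $\eta$ collapses to $4\,(-1)^{n-1}\mathrm N(a)$, giving
\[\epsilon=(-1)^{n-1}\,\eta(-1)^{n-1}\,\eta(\mathrm N(a)).\]

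Finally, writing $\eta(-1)=(-1)^{(q-1)/2}$ and collecting signs via $1+\tfrac{q-1}2=\tfrac{q+1}2$ yields $\epsilon=(-1)^{\frac{q+1}2(n-1)}\eta(\mathrm N(a))$, as claimed.

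The argument is essentially mechanical once the two prior results are invoked, so there is no substantial obstacle; the only points requiring care are bookkeeping ones. Specifically, one must use the correct leading coefficient $2a$ of $\ell$ (not $a$) when applying part \ref{det_3}, verify that $\dim_{\mathbb F_q}\ker L=2$ places us exactly at $r=n-2m$ so that the hypothesis of that part is met, and track the parity contributions of $(-1)^{n-1}$ together with $\eta(-1)$ to arrive at the combined exponent $\tfrac{q+1}2(n-1)$.
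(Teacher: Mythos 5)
Your proof is correct and takes essentially the same route as the paper: it substitutes the identity $a^{2q^{n-2}}\det M_L^{(n-2)}=(-1)^{n-1}\mathrm N(a)$ from Proposition \ref{L} into part \ref{det_3} of Theorem \ref{det} applied with $m=1$, $r=n-2$ and leading coefficient $2a$, then simplifies the sign via $\eta(-1)=(-1)^{(q-1)/2}$. Your explicit bookkeeping of the factor $2^{2q^{n-2}}=4$ (harmless since $\eta(4)=1$) is a detail the paper leaves implicit, but otherwise the two arguments coincide.
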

\begin{proof}
It follows from Proposition \ref{L} that
$\mathrm N(a)+(-1)^na^{2q^{n-2}}\det M_L^{(n-2)}=0$. Therefore,
\[\epsilon=(-1)^{n-1}\eta((-1)^{n-1}\mathrm N(a))=(-1)^{\frac{q+1}2(n-1)}\eta(\mathrm N(a)),\]
as a result of \ref{det_3} of Theorem \ref{det}.
\end{proof}

Consider the binomial $\ell(x)=ax^{q^m}+bx^{q^l}$ for $a,b\in\mathbb F_{q^n}^*$ with positive integers $m$ and $l$. It is easy to see that $\dim_{\mathbb F_q}\ker\ell$ is either $0$ or $\gcd(m-l,n)$. Denote by $v_2$ the $2$-adic valuation of $\mathbb Q$, and by $\mathrm{Tr}_k$ the trace map of $\mathbb F_{q^n}/\mathbb F_{q^k}$ for a positive divisor $k$ of $n$. We have the following results concerning the composition of two linear endomorphisms of $\mathbb F_{q^n}/\mathbb F_q$.

\begin{theorem}
Let $\ell(x)=ax^{q^m}+bx^{q^l}$ for $a,b\in\mathbb F_{q^n}^*$ with $0<l<m<\frac n2$, $d=\gcd(m,m-l)$ and $e=\lcm(m+l,m-l)$. Then
\begin{enumerate}
\item\label{b_1}$\alpha\ell(x)^{q^{n-m-l}}=\ell^\prime(x)$ for some $\alpha\in\mathbb F_{q^n}^*$ if and only if $a^{q^l+1}=b^{q^m+1}$;
\item\label{b_2}if both $m+l$ and $m-l$ divide $n$, $a=\gamma^{q^m+1}\delta$ and $b=-\gamma^{q^l+1}\delta$ for $\gamma\in\mathbb F_{q^n}^*$ and $\delta\in\mathbb F_{q^{m-l}}^*$ such that
\begin{itemize}
\item $v_2(l)\ge v_2(m-l)$ and $\gcd\big(\frac ne,q\big)\ne1$, or
\item $v_2(l)<v_2(m-l)$ and $\delta^{\frac{q^{m-l}-1}{q^d+1}\gcd(\frac ne,q^d+1)}=1$ with either $\delta^{\frac{q^{m-l}-1}{q^d+1}}\ne1$ or $\gcd\big(\frac ne,q\big)\ne1$,
\end{itemize}
then $\dim_{\mathbb F_q}\ker L=2m$ for $L=\frac{\ell+\ell^\prime}2$;
\item\label{b_3}the converse of \ref{b_2} is true provided that $a^{q^l+1}=b^{q^m+1}$.
\end{enumerate}
\end{theorem}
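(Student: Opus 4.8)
The plan is to dispatch \ref{b_1} by a direct coefficient comparison and then to build \ref{b_2} and \ref{b_3} on the single structural identity it provides. For \ref{b_1} I would expand
\[\ell^\prime(x)=a^{q^{n-m}}x^{q^{n-m}}+b^{q^{n-l}}x^{q^{n-l}},\qquad \ell(x)^{q^{n-m-l}}=a^{q^{n-m-l}}x^{q^{n-l}}+b^{q^{n-m-l}}x^{q^{n-m}},\]
reducing exponents of $q$ modulo $n$, and match the coefficients of $x^{q^{n-m}}$ and of $x^{q^{n-l}}$. This forces $\alpha=a^{q^{n-m}}b^{-q^{n-m-l}}=b^{q^{n-l}}a^{-q^{n-m-l}}$, and the two expressions for $\alpha$ agree precisely when $a^{q^{n-m-l}(q^l+1)}=b^{q^{n-m-l}(q^m+1)}$; since the Frobenius $x\mapsto x^{q^{n-m-l}}$ is a bijection of $\mathbb F_{q^n}$, this is equivalent to $a^{q^l+1}=b^{q^m+1}$, giving \ref{b_1}.

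The key point for the rest is that whenever $a^{q^l+1}=b^{q^m+1}$ holds (assumed in \ref{b_3}, and, as a quick check, forced by the parametrization in \ref{b_2} because $q^m+1$ is even and $\delta^{q^{m-l}-1}=1$), part \ref{b_1} lets me write $2L=\psi\circ\ell$ with $\psi(y)=y+\alpha y^{q^{n-m-l}}$. Writing $F$ for the Frobenius $x\mapsto x^q$ and $[c]$ for multiplication by $c$, the factorization $\ell=([a]F^{m-l}+[b])F^l$ together with the dimension formula for a composition gives
\[\dim_{\mathbb F_q}\ker L=\dim_{\mathbb F_q}\ker\ell+\dim_{\mathbb F_q}(\ker\psi\cap\im\ell).\]
Now $\dim_{\mathbb F_q}\ker\ell\in\{0,\gcd(m-l,n)\}$ and $\dim_{\mathbb F_q}\ker\psi\in\{0,\gcd(m+l,n)\}$, while $\gcd(m\pm l,n)\le m\pm l$ because $0<l<m<\frac n2$. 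Hence the sum is at most $(m-l)+(m+l)=2m$, and a short inequality argument shows that $\dim_{\mathbb F_q}\ker L=2m$ forces equality everywhere: $m-l\mid n$, $m+l\mid n$, both $\ell$ and $\psi$ have full kernels (dimensions $m-l$ and $m+l$), and $\ker\psi\subseteq\im\ell$. Conversely these four conditions suffice. This isolates exactly what must be shown.

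Next I would convert ``$\ell$ has full kernel'' into the parametrization. A nonzero kernel element satisfies $x^{q^l(q^{m-l}-1)}=-b/a$, so full kernel (with $m-l\mid n$) is equivalent to the existence of $\gamma\in\mathbb F_{q^n}^*$ with $-a/b=\gamma^{q^l(q^{m-l}-1)}$; setting $\delta=-b\gamma^{-(q^l+1)}$ and using $a^{q^l+1}=b^{q^m+1}$ to verify $\delta^{q^{m-l}}=\delta$ yields $\delta\in\mathbb F_{q^{m-l}}^*$ and then $a=\gamma^{q^m+1}\delta$, $b=-\gamma^{q^l+1}\delta$. With this substitution $\ell(x)=\delta\gamma\big((\gamma x)^{q^m}-(\gamma x)^{q^l}\big)$, so $\im\ell=\delta\gamma\cdot\im(F^{m-l}-I)=\delta\gamma\cdot\ker\mathrm{Tr}_{m-l}$. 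Computing $\alpha$ in terms of $\gamma,\delta$ and substituting $y=\gamma w$ then makes $\gamma$ cancel entirely: $\ker\psi=\gamma\cdot\{w:w^{q^{n-m-l}}=c_0w\}$ with $c_0=\delta^{-q^{n-m-l}(q^l-1)}$, a one-dimensional $\mathbb F_{q^{m+l}}$-line $w_0\mathbb F_{q^{m+l}}$, and the containment $\ker\psi\subseteq\im\ell$ becomes $w_0\mathbb F_{q^{m+l}}\subseteq\ker\mathrm{Tr}_{m-l}$. Thus both remaining conditions become statements about $\delta$ alone.

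The main obstacle is the final translation of these two conditions on $\delta$ into the stated criteria. For the containment, transitivity of the trace and the identity $\mathbb F_{q^{m+l}}\mathbb F_{q^{m-l}}=\mathbb F_{q^e}$ with $e=\lcm(m+l,m-l)$ reduce $w_0\mathbb F_{q^{m+l}}\subseteq\ker\mathrm{Tr}_{m-l}$ to the vanishing of $\sum_{i=0}^{n/e-1}w_0^{q^{ei}}$; the recursion $w_0^{q^{n-m-l}}=c_0w_0$ turns this into a geometric-type sum of $n/e$ terms, which cancels either when the characteristic divides $n/e$, i.e.\ $\gcd(\frac ne,q)\ne1$, or when the governing scalar is a nontrivial root of unity, i.e.\ $\delta^{(q^{m-l}-1)/(q^d+1)}\ne1$. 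For the fullness of $\ker\psi$ the norm condition on $c_0=\delta^{-q^{n-m-l}(q^l-1)}$ becomes an order condition on $\delta\in\mathbb F_{q^{m-l}}^*$, and here, with $d=\gcd(m,m-l)=\gcd(m,l)$, the identities $\gcd(q^l-1,q^{m-l}-1)=q^d-1$ and the dichotomy $\gcd(q^a-1,q^b+1)=q^{\gcd(a,b)}+1$ exactly when $v_2(a)>v_2(b)$ (and at most $2$ otherwise) are what split the analysis into the cases $v_2(l)\ge v_2(m-l)$ and $v_2(l)<v_2(m-l)$ and produce the exponent $\frac{q^{m-l}-1}{q^d+1}\gcd(\frac ne,q^d+1)$. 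I expect the delicate, error-prone part to be this bookkeeping — tracking precisely which powers of $q^d+1$ and of the characteristic divide the relevant exponents so that the order and vanishing conditions assemble into the displayed forms — whereas the structural reduction above is routine.
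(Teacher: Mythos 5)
Your proposal is correct and is essentially the paper's own argument: coefficient comparison for (i); the factorization $2L=\ell_0\circ\ell$ with $\ell_0(y)=y+\alpha y^{q^{n-m-l}}$ and the chain $\dim_{\mathbb F_q}\ker L=\dim_{\mathbb F_q}\ker\ell+\dim_{\mathbb F_q}(\ker\ell_0\cap\im\ell)\le 2m$, whose equality case forces $(m+l)\mid n$, $(m-l)\mid n$, full kernels and $\ker\ell_0\subseteq\im\ell$; the $\gamma,\delta$ parametrization; and the reduction of the two residual conditions to $\theta^{n/e}=1$ and $\sum_{i=0}^{n/e-1}\theta^i=0$ for $\theta=\delta^{(q^m-1)\frac{q^e-1}{q^{m+l}-1}}$ (your $c_0=\delta^{-q^{n-m-l}(q^l-1)}$ agrees with this, since $\delta^{q^m-1}=\delta^{q^l-1}$ for $\delta\in\mathbb F_{q^{m-l}}^*$). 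The one step you only sketch, converting these into the displayed order conditions on $\delta$, is exactly the paper's preliminary claim on $\gcd\big((q^m-1)\frac{q^e-1}{q^{m+l}-1},\,q^{m-l}-1\big)$, which it proves via the polynomial divisibility $(z^{m+l}-1)(z^{m-l}-1)\mid(z^e-1)(z^{\gcd(m+l,m-l)}-1)$ and congruences modulo $q^d+1$ — the same $v_2$ dichotomy you cite, so your outlined tools do suffice.
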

\begin{proof}
First of all, we claim that
\begin{itemize}
\item if $v_2(l)\ge v_2(m-l)$, then $(q^m-1)\frac{q^e-1}{q^{m+l}-1}$ is divisible by $q^{m-l}-1$;
\item if $v_2(l)<v_2(m-l)$, then
\[\gcd\Big((q^m-1)\frac{q^e-1}{q^{m+l}-1},q^{m-l}-1\Big)=\frac{q^{m-l}-1}{q^d+1}\]
and
\[\gcd\Big((q^m-1)\frac{q^e-1}{q^{m+l}-1}\frac ne,q^{m-l}-1\Big)=\frac{q^{m-l}-1}{q^d+1}\gcd\Big(\frac ne,q^d+1\Big),\]
when $n$ is divisible by $e$.
\end{itemize}
For $d=\gcd(m,m-l)=\gcd(l,m-l)$, if $v_2(l)\ge v_2(m-l)$, then $\gcd(m+l,m-l)=\gcd(2l,m-l)=d$ and
\[(q^m-1)\frac{q^e-1}{q^{m+l}-1}=\frac{q^m-1}{q^d-1}\frac{(q^e-1)(q^d-1)}{q^{m+l}-1}\]
is divisible by $q^{m-l}-1$. To see this, note that $(z^{m+l}-1)(z^{m-l}-1)$ divides $(z^e-1)(z^d-1)$ in $\mathbb C[z]$; both of them are monic with coefficients in $\mathbb Z$, which means $\frac{(z^e-1)(z^d-1)}{(z^{m+l}-1)(z^{m-l}-1)}$ is a polynomial over $\mathbb Z$. Assume that $v_2(l)<v_2(m-l)$. Then $\gcd(m+l,m-l)=2d$ and $\big(z^\frac{m+l}{2d}-1\big)\big(z^\frac{m-l}{2d}-1\big)$ divides $\big(z^\frac e{2d}-1\big)(z-1)$ over $\mathbb Z$. Moreover,
\[\frac{\big(z^\frac e{2d}-1\big)(z-1)}{\big(z^\frac{m+l}{2d}-1\big)\big(z^\frac{m-l}{2d}-1\big)}=\frac{\frac{z^\frac e{2d}-1}{z-1}}{\frac{z^\frac{m+l}{2d}-1}{z-1}\frac{z^\frac{m-l}{2d}-1}{z-1}}\equiv\frac{\frac e{2d}}{\frac{m+l}{2d}\frac{m-l}{2d}}\equiv1\pmod{z-1},\]
with $\frac{\big(z^\frac e{2d}-1\big)(z-1)}{\big(z^\frac{m+l}{2d}-1\big)\big(z^\frac{m-l}{2d}-1\big)}-1$ divisible by $z-1$ over $\mathbb Z$. Substitute $q^{2d}$ for $z$ to get
\begin{equation}\label{eq4}\frac{(q^e-1)(q^{2d}-1)}{(q^{m+l}-1)(q^{m-l}-1)}\equiv1\pmod{q^{2d}-1}.\end{equation}
Now
\[(q^m-1)\frac{q^e-1}{q^{m+l}-1}=\frac{q^{m-l}-1}{q^d+1}\frac{q^m-1}{q^d-1}\frac{(q^e-1)(q^{2d}-1)}{(q^{m+l}-1)(q^{m-l}-1)}.\]
Noting that $v_2(m)=v_2(l+m-l)=\min\{v_2(l),v_2(m-l)\}=v_2(d)$ and
\[\frac{q^m-1}{q^d-1}=\frac{(q^d)^\frac md-1}{q^d-1}=\sum_{i=0}^{\frac md-1}q^{di}\equiv1\pmod{q^d+1},\]
with
\[\frac{(q^e-1)(q^{2d}-1)}{(q^{m+l}-1)(q^{m-l}-1)}\equiv1\pmod{q^d+1}\]
from \eqref{eq4}, we get
\[\begin{split}&\mathrel{\phantom{=}}\gcd\Big((q^m-1)\frac{q^e-1}{q^{m+l}-1},q^{m-l}-1\Big)\\&=\frac{q^{m-l}-1}{q^d+1}\gcd\Big(\frac{q^m-1}{q^d-1}\frac{(q^e-1)(q^{2d}-1)}{(q^{m+l}-1)(q^{m-l}-1)},q^d+1\Big)\\&=\frac{q^{m-l}-1}{q^d+1}.\end{split}\]
A similar argument gives
\[\gcd\Big((q^m-1)\frac{q^e-1}{q^{m+l}-1}\frac ne,q^{m-l}-1\Big)=\frac{q^{m-l}-1}{q^d+1}\gcd\Big(\frac ne,q^d+1\Big),\]
when $e$ divides $n$. The claim is now valid.

For $\alpha\in\mathbb F_{q^n}$, we have
\[\alpha\ell(x)^{q^{n-m-l}}=\alpha a^{q^{n-m-l}}x^{q^{n-l}}+\alpha b^{q^{n-m-l}}x^{q^{n-m}},\]
and
\[\ell^\prime(x)=(ax)^{q^{n-m}}+(bx)^{q^{n-l}}.\]
Then $\alpha\ell(x)^{q^{n-m-l}}=\ell^\prime(x)$ if and only if
\[\alpha a^{q^{n-m-l}}-b^{q^{n-l}}=\alpha b^{q^{n-m-l}}-a^{q^{n-m}}=0,\]
which is equivalent to
\begin{equation}\label{eq2}\alpha^{q^{m+l}}=b^{q^m}a^{-1}=a^{q^l}b^{-1},\end{equation}
and \ref{b_1} follows. Suppose that this is the case. Let $\ell_0(x)=\alpha x^{q^{n-m-l}}+x$ so that
\begin{equation}\label{eq6}\begin{split}\dim_{\mathbb F_q}\ker(\ell_0\circ\ell)&=\dim_{\mathbb F_q}(\ker\ell_0\cap\im\ell)+\dim_{\mathbb F_q}\ker\ell\\&\le\dim_{\mathbb F_q}\ker\ell_0+\dim_{\mathbb F_q}\ker\ell\\&\le\gcd(m+l,n)+\gcd(m-l,n)\\&\le m+l+m-l\\&=2m.\end{split}\end{equation}
Here, all the equalities hold if and only if $\ker\ell_0\cap\im\ell=\ker\ell_0$, $\dim_{\mathbb F_q}\ker\ell_0=m+l$ and $\dim_{\mathbb F_q}\ker\ell=m-l$ with both $m+l$ and $m-l$ dividing $n$.

Next, we show \ref{b_3} with the assumption that $\alpha\ell(x)^{q^{n-m-l}}=\ell^\prime(x)$ and $\dim_{\mathbb F_q}\ker(\ell_0\circ\ell)=\dim_{\mathbb F_q}\ker L=2m$. Given $\dim_{\mathbb F_q}\ker\ell_0$, we see that $\alpha=-\beta^{q^{n-m-l}-1}$ for some $\beta\in\mathbb F_{q^n}^*$, and
\[\ell_0(x)=-\beta^{q^{n-m-l}-1}x^{q^{n-m-l}}+x=-\beta^{-1}\big((\beta x)^{q^{n-m-l}}-\beta x\big)\]
with $\ker\ell_0=\beta^{-1}\mathbb F_{q^{m+l}}$. Likewise, $b=-a\gamma^{q^l-q^m}$ for some $\gamma\in\mathbb F_{q^n}^*$, so that
\[\ell(x)=ax^{q^m}-a\gamma^{q^l-q^m}x^{q^l}=a\gamma^{-q^m}\big((\gamma x)^{q^m}-(\gamma x)^{q^l}\big)\]
with $\im\ell=a\gamma^{-q^m}\ker\mathrm{Tr}_{m-l}$. It follows from \eqref{eq2} that
\[a^{q^l-q^m}=\frac{a^{q^l+1}}{a^{q^m+1}}=\frac{b^{q^m+1}}{a^{q^m+1}}=\gamma^{(q^l-q^m)(q^m+1)},\]
and thus $a=\gamma^{q^m+1}\delta$ for some $\delta\in\mathbb F_{q^{m-l}}^*$ with $b=-a\gamma^{q^l-q^m}=-\gamma^{q^l+1}\delta$. Furthermore,
\[-\beta^{1-q^{m+l}}=\alpha^{q^{m+l}}=b^{q^m}a^{-1}=-\gamma^{q^{m+l}-1}\delta^{q^m-1},\]
so
\begin{equation}\label{eq3}(\beta\gamma)^{1-q^{m+l}}=\delta^{q^m-1}.\end{equation}
Recall that $\ker\ell_0\subseteq\im\ell$, which is
\[\beta^{-1}\mathbb F_{q^{m+l}}\subseteq a\gamma^{-q^m}\ker\mathrm{Tr}_{m-l}=\gamma\delta\ker\mathrm{Tr}_{m-l}=\gamma\ker\mathrm{Tr}_{m-l}.\]
In other words, $\mathrm{Tr}_{m-l}$ vanishes on $(\beta\gamma)^{-1}\mathbb F_{q^{m+l}}$. Clearly, $e=\lcm(m+l,m-l)$ divides $n$ and
\begin{equation}\label{eq11}\begin{split}&\mathrel{\phantom{\equiv}}\sum_{i=0}^{\frac e{m-l}-1}(\mathrm{Tr}_e((\beta\gamma)^{-1})x)^{q^{(m-l)i}}\\&\equiv\sum_{i=0}^{\frac e{m-l}-1}\mathrm{Tr}_e((\beta\gamma)^{-1}x)^{q^{(m-l)i}}\\&\equiv\mathrm{Tr}_{m-l}((\beta\gamma)^{-1}x)\\&\equiv0\pmod{x^{q^{m+l}}-x}.\end{split}\end{equation}
Here $x^{q^{(m-l)i}}\equiv x\pmod{x^{q^{m+l}}-x}$ if and only if $(m-l)i\equiv0\pmod{m+l}$; i.e., $i\equiv0\pmod{\frac e{m-l}}$. Then $\mathrm{Tr}_e((\beta\gamma)^{-1})=0$, which, combined with \eqref{eq3}, leads to
\[0=\sum_{i=0}^{\frac ne-1}(\beta\gamma)^{-q^{ei}}=(\beta\gamma)^{-1}\sum_{i=0}^{\frac ne-1}(\beta\gamma)^{1-q^{ei}}=(\beta\gamma)^{-1}\sum_{i=0}^{\frac ne-1}\big(\delta^{q^m-1}\big)^\frac{q^{ei}-1}{q^{m+l}-1}.\]
Let $\theta=\delta^{(q^m-1)\frac{q^e-1}{q^{m+l}-1}}$, which belongs to $\mathbb F_{q^e}^*$, so that
\begin{equation}\label{eq10}\sum_{i=0}^{\frac ne-1}\theta^i=\sum_{i=0}^{\frac ne-1}\theta^\frac{q^{ei}-1}{q^e-1}=0.\end{equation}
Then $\theta^\frac ne=1$; in addition, $\theta\ne1$ when $\gcd\big(\frac ne,q\big)=1$. If $v_2(l)\ge v_2(m-l)$, then $\theta=1$ by the claim, and necessarily $\gcd\big(\frac ne,q\big)\ne1$. Otherwise, $\theta=1$ if and only if $\delta^\frac{q^{m-l}-1}{q^d+1}=1$, and $\theta^\frac ne=1$ if and only if $\delta^{\frac{q^{m-l}-1}{q^d+1}\gcd(\frac ne,q^d+1)}=1$. This proves \ref{b_3}.

Assume the conditions in \ref{b_2}. Then
\[\ell(x)=ax^{q^m}+bx^{q^l}=\gamma\delta\big((\gamma x)^{q^m}-(\gamma x)^{q^l}\big),\]
with $\dim_{\mathbb F_q}\ker\ell=m-l$ and $\im\ell=\gamma\ker\mathrm{Tr}_{m-l}$. It is straightforward to verify that $a^{q^l+1}=b^{q^m+1}$, and $\alpha\ell(x)^{q^{n-m-l}}=\ell^\prime(x)$ with
\[\alpha^{q^{m+l}}=b^{q^m}a^{-1}=-\gamma^{q^{m+l}-1}\delta^{q^m-1}\]
from \eqref{eq2}. Now \eqref{eq6} applies and it remains to show that $\dim_{\mathbb F_q}\ker\ell_0=m+l$ with $\ker\ell_0\subseteq\im\ell$, where $\ell_0(x)=\alpha x^{q^{n-m-l}}+x$.
For $\theta=\delta^{(q^m-1)\frac{q^e-1}{q^{m+l}-1}}$, the condition on $\delta$ together with the claim implies $\sum_{i=0}^{\frac ne-1}\theta^i=0$, so
\[(-\alpha)^{q^{m+l}\frac{q^n-1}{q^{m+l}-1}}=\big(\gamma^{q^{m+l}-1}\delta^{q^m-1}\big)^\frac{q^n-1}{q^{m+l-1}}=\delta^{(q^m-1)\frac{q^e-1}{q^{m+l}-1}\frac{q^n-1}{q^e-1}}=\theta^\frac ne=1.\]
Then $\alpha=-\beta^{q^{n-m-l}-1}$ for some $\beta\in\mathbb F_{q^n}^*$, so that $(\beta\gamma)^{1-q^{m+l}}=\delta^{q^m-1}$ and $\ker\ell_0=\beta^{-1}\mathbb F_{q^{m+l}}$, with
\[\mathrm{Tr}_e((\beta\gamma)^{-1})=(\beta\gamma)^{-1}\sum_{i=0}^{\frac ne-1}\big(\delta^{q^m-1}\big)^\frac{q^{ei}-1}{q^{m+l}-1}=(\beta\gamma)^{-1}\sum_{i=0}^{\frac ne-1}\theta^i=0.\]
By the same argument as in \eqref{eq11}, we have $\ker\ell_0\subseteq\im\ell$ and establish \ref{b_2}.
\end{proof}

Recall that
\[\sum_{w\in\mathbb F_{q^n}}\chi(\mathrm{Tr}(w\ell(w)))=\epsilon\mathcal G^rq^{n-r},\]
where $r$ is the rank of the quadratic form, $\chi$ is the canonical additive character of $\mathbb F_q$ and $\mathcal G$ is the quadratic Gauss sum of $\mathbb F_q$. The quadratic form from one of the polynomials in the above theorem can be further investigated.

\begin{proposition}
Let $\ell(x)=\gamma^{q^m+1}\delta x^{q^m}-\gamma^{q^l+1}\delta x^{q^l}$ for $\gamma\in\mathbb F_{q^n}^*$ and $\delta\in\mathbb F_{q^{m-l}}^*$ with $0<l<m<\frac n2$, $v_2(l)\ge v_2(m-l)$, $n$ divisible by both $m+l$ and $m-l$, and $\gcd\big(\frac n{\lcm(m+l,m-l)},q\big)\ne1$. Then $\epsilon=(-1)^{n-1+\frac{q-1}2(\frac nd-1)m}\eta(\mathrm N(2\delta))$ for $d=\gcd(m,m-l)$.
\end{proposition}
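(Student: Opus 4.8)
The hypotheses on $m,l,n,q$ together with $a=\gamma^{q^m+1}\delta$ and $b=-\gamma^{q^l+1}\delta$ are precisely those of the first bullet of \ref{b_2} in the preceding theorem, so $\dim_{\mathbb F_q}\ker L=2m$ and therefore $r=\dim_{\mathbb F_q}\im L=n-2m$; since $\deg\ell=q^m$, this places us exactly in the situation of \ref{det_3} of Theorem \ref{det}. Before invoking that part I would first clear the factor $\gamma$. Writing $\ell(x)=\gamma\delta\big((\gamma x)^{q^m}-(\gamma x)^{q^l}\big)$ and substituting $y=\gamma x$ gives $x\ell(x)=\delta\big(y^{1+q^m}-y^{1+q^l}\big)$, so $\mathrm{Tr}(x\ell(x))=\tilde Q(\gamma x)$ with $\tilde Q(y)=\mathrm{Tr}\big(y\tilde\ell(y)\big)$ and $\tilde\ell(y)=\delta y^{q^m}-\delta y^{q^l}$. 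As $x\mapsto\gamma x$ is an $\mathbb F_q$-linear automorphism of $\mathbb F_{q^n}$ and both $r$ and $\epsilon$ are invariant under an $\mathbb F_q$-linear change of variable, it suffices to compute $\epsilon$ for $\tilde\ell$, whose coefficients now lie in the proper subfield $\mathbb F_{q^{m-l}}$ because $\delta\in\mathbb F_{q^{m-l}}^*$. This is the decisive simplification: the two parameters $\gamma,\delta$ collapse to the single $\delta\in\mathbb F_{q^{m-l}}^*$.

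With $\tilde L=\frac{\tilde\ell+\tilde\ell^\prime}2$ (leading coefficient $\delta$), \ref{det_3} of Theorem \ref{det} yields $\epsilon=(-1)^{n-1}\eta\big(\delta^{2q^r\frac{q^m-1}{q-1}}\det M_{\tilde L}^{(r)}\big)$, the argument of $\eta$ being a well-defined element of $\mathbb F_q^*$. Comparing with the target formula and using the identity $(-1)^{\frac{q-1}2k}=\eta\big((-1)^k\big)$ with $k=(\frac nd-1)m$, the entire statement reduces to the single congruence
\[\delta^{2q^r\frac{q^m-1}{q-1}}\det M_{\tilde L}^{(r)}\equiv(-1)^{(\frac nd-1)m}\,\mathrm N(2\delta)\pmod{(\mathbb F_q^*)^2}.\]
Thus everything comes down to evaluating the submatrix determinant $\det M_{\tilde L}^{(r)}$ modulo squares and, after absorbing the explicit power of $\delta$, recognizing it as $\mathrm N(2\delta)$ up to the stated sign.

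For this evaluation I would exploit the factorization $\tilde\ell(y)=\delta\big(y^{q^{m-l}}-y\big)^{q^l}$, the membership $\delta\in\mathbb F_{q^{m-l}}$, and the divisibilities $(m+l)\mid n$ and $(m-l)\mid n$. One route follows the internal mechanism of \ref{det_3}: since $\tilde L=\tilde L^\prime$ one has $\im\tilde L=(\ker\tilde L)^\perp=\ker L_0$ for an explicit monic $q$-linear $L_0$ of degree $q^r$, and the companion monic $L_1$ of degree $q^{2m}$ with $L_0(L_1(x))=x^{q^n}-x$ is read off from the factored shape of $\tilde\ell$; then $\det M_{\tilde L}^{(r)}=(\det B_{11})^2\det C$ as in \eqref{eq5}, and writing $\mathrm{Tr}=\mathrm{Tr}_{\mathbb F_{q^{m-l}}/\mathbb F_q}\circ\mathrm{Tr}_{m-l}$ and pushing $\delta$ through the inner trace should collapse the Gram determinant $\det C$ to a relative norm of $\delta$, which assembles into $\mathrm N_{\mathbb F_{q^n}/\mathbb F_q}(\delta)=\mathrm N_{\mathbb F_{q^{m-l}}/\mathbb F_q}(\delta)^{n/(m-l)}$. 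An alternative is a Vandermonde/root-product computation in the spirit of Lemma \ref{zeta}, carried out over the splitting field of $x^{q^n}-x$, where the number $d=\gcd(m,m-l)$ governs the Frobenius orbits of the relevant roots of unity and hence the exponent $n/d$ appearing in the sign.

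I expect this determinant evaluation to be the main obstacle. Because $\tilde L$ does not have coefficients in $\mathbb F_q$, the clean circulant diagonalization of Lemma \ref{zeta} and the formulas of Theorem \ref{e} are not directly available, so one is forced to work relatively over $\mathbb F_{q^{m-l}}$ (or over the splitting field) and to descend to $\mathbb F_q$ only at the end through the norm. The two bookkeeping tasks are routine but delicate and must be matched carefully to the stated exponent: collecting the powers of $2$ produced by $\tilde L=\frac{\tilde\ell+\tilde\ell^\prime}2$ into the factor $2^n$ of $\mathrm N(2\delta)=2^n\mathrm N(\delta)$, and pinning down the sign $(-1)^{(\frac nd-1)m}$ from the Vandermonde/root-difference products together with the factors $\omega_j^{-r}$, where the interplay of $d$, $m$ and $n/d$ is exactly what yields the claimed power. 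Once the displayed congruence is established, the sign substitution recorded above finishes the proof.
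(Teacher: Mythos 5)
Your reduction is sound as far as it goes: \ref{b_2} of the preceding theorem does give $\dim_{\mathbb F_q}\ker L=2m$, the substitution $y=\gamma x$ does preserve $r$ and $\epsilon$, and \ref{det_3} of Theorem \ref{det} does reduce the claim to the congruence $\delta^{2q^r\frac{q^m-1}{q-1}}\det M_{\tilde L}^{(r)}\equiv(-1)^{(\frac nd-1)m}\mathrm N(2\delta)\pmod{(\mathbb F_q^*)^2}$. But at that point the proof stops: the determinant is never evaluated, and neither of the two routes you sketch for it works as stated. Lemma \ref{zeta} and Theorem \ref{e} require all coefficients to lie in the base field, which $\tilde L$ fails; and ``working relatively over $\mathbb F_{q^{m-l}}$'' is not available, because $\mathrm{Tr}_{m-l}\big(\delta y\big(y^{q^m}-y^{q^l}\big)\big)$ is \emph{not} a quadratic form of $\mathbb F_{q^n}/\mathbb F_{q^{m-l}}$: the maps $y\mapsto y^{q^m}$ and $y\mapsto y^{q^l}$ are $\mathbb F_{q^k}$-linear only for $k$ dividing $m$, respectively $l$, and $m-l$ divides neither in general, so your hope that the Gram determinant ``collapses to a relative norm of $\delta$'' has no mechanism behind it. The only subfield over which the form is genuinely quadratic is $\mathbb F_{q^d}$ with $d=\gcd(m,l)=\gcd(m,m-l)$ --- and $\delta\in\mathbb F_{q^{m-l}}^*$ need not lie in $\mathbb F_{q^d}$, so even that descent is blocked without a further normalization. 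Tellingly, your plan gives the hypothesis $v_2(l)\ge v_2(m-l)$ no role in the $\epsilon$-computation (only in citing \ref{b_2}), whereas it is needed again precisely at the missing step.

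The paper's proof supplies two ideas absent from your proposal, and it never touches $\det M_{\tilde L}^{(r)}$. First, it finds $c\in\mathbb F_{q^{m-l}}^*$ with $\delta c^{q^m+1}\in\mathbb F_{q^d}$: one shows $\gcd\big(q^m+1,q^{m-l}-1\big)=2$ (via $\gcd(2m,m-l)=d$ and $q^m+1\equiv2\pmod{q^d-1}$), so it suffices to arrange $\delta c_0^2\in\mathbb F_{q^d}$, which is possible because $v_2(l)\ge v_2(m-l)$ forces $(m-l)/d$ to be odd and hence a non-square of $\mathbb F_{q^{m-l}}$ can be taken inside $\mathbb F_{q^d}^*$. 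After the substitution $x\mapsto\gamma c^{-1}x$, one has $x\ell(x)=\delta c^{q^m+1}\big(z^{q^m+1}-z^{q^l+1}\big)$ with $z=\gamma c^{-1}x$, and $\mathrm{Tr}_d\big(\delta c^{q^m+1}z\big(z^{q^m}-z^{q^l}\big)\big)$ is a quadratic form of $\mathbb F_{q^n}/\mathbb F_{q^d}$ with coefficients in the base field $\mathbb F_{q^d}$, so \ref{e_4} of Theorem \ref{e} applies over $\mathbb F_{q^d}$ and gives its invariant $\epsilon_d=(-1)^{(\frac nd-1)(\frac{q-1}2m+1)}\eta(\mathrm N(2\delta))$ in closed form (the factor $c^{q^m+1}$ drops out since $\mathrm N\big(c^{q^m+1}\big)$ is a square). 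Second, the two evaluations of the character sum, $\sum_{w}\chi(\mathrm{Tr}(w\ell(w)))=\epsilon\mathcal G^{n-2m}q^{2m}=\epsilon_d\mathcal G_d^{(n-2m)/d}q^{2m}$, are compared through the Davenport--Hasse relation $\mathcal G_d=(-1)^{d-1}\mathcal G^d$, which converts $\epsilon_d$ into $\epsilon$ and produces the remaining part of the sign $(-1)^{n-1+\frac{q-1}2(\frac nd-1)m}$; this ingredient is entirely missing from your sketch, yet it is where the exponent $n/d$ actually comes from. If you want to salvage your determinant route, the realistic path is to first perform the normalization by $c$ and descend to $\mathbb F_{q^d}$ --- at which point you would be reproving the paper's argument.
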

\begin{proof}
We show that there exists some $c\in\mathbb F_{q^{m-l}}^*$ with $\delta c^{q^m+1}\in\mathbb F_{q^d}$, so that
\begin{equation}\label{eq12}\begin{split}x\ell(x)&=\gamma^{q^m+1}\delta c^{q^m+1}(c^{-1}x)^{q^m+1}-\gamma^{q^l+1}\delta c^{q^l+1}(c^{-1}x)^{q^l+1}\\&=\delta c^{q^m+1}\big((\gamma c^{-1}x)^{q^m+1}-(\gamma c^{-1}x)^{q^l+1}\big).\end{split}\end{equation}
Note that $\gcd(q^m+1,q^{m-l}-1)$ divides
\[\gcd(q^{2m}-1,q^{m-l}-1)=q^{\gcd(2m,m-l)}-1=q^d-1,\]
so $\gcd(q^m+1,q^{m-l}-1)$ divides $\gcd(q^m+1,q^d-1)=2$ as $q^m+1\equiv2\pmod{q^d-1}$. Then
\[\gcd(q^m+1,q^{m-l}-1)=2,\]
and it suffices to find an element $c_0\in\mathbb F_{q^{m-l}}^*$ such that $\delta c_0^2\in\mathbb F_{q^d}$. Take $\alpha\in\mathbb F_{q^d}^*$ that is a non-square element in $\mathbb F_{q^{m-l}}$, so that one may let either $c_0^{-2}=\delta$ or $c_0^{-2}=\delta\alpha$ to get $\delta c_0^2$ lying in $\mathbb F_{q^d}$. Such $\alpha$ exists in $\mathbb F_{q^d}^*$ since $\alpha^\frac{q^{m-l}-1}2=\alpha^{\frac{q^d-1}2\frac{m-l}d}$, while $v_2(d)=\min\{v_2(l),v_2(m-l)\}=v_2(m-l)$ and $\frac{m-l}d$ is odd.

For the canonical additive character $\chi_d$ of $\mathbb F_{q^d}$, it follows from \eqref{eq12} that
\[\epsilon\mathcal G^{n-2m}q^{2m}=\sum_{w\in\mathbb F_{q^n}}\chi(\mathrm{Tr}(w\ell(w)))=\sum_{w\in\mathbb F_{q^n}}\chi_d\big(\mathrm{Tr}_d\big(\delta c^{q^m+1}w\big(w^{q^m}-w^{q^l}\big)\big)\big).\]
Now $\mathrm{Tr}_d\big(\delta c^{q^m+1}x\big(x^{q^m}-x^{q^l}\big)\big)$ is a quadratic form of $\mathbb F_{q^n}/\mathbb F_{q^d}$ with coefficients in $\mathbb F_{q^d}$. Taking the absolute value into account, we have
\[\sum_{w\in\mathbb F_{q^n}}\chi_d\big(\mathrm{Tr}_d\big(\delta c^{q^m+1}w\big(w^{q^m}-w^{q^l}\big)\big)\big)=\epsilon_d\mathcal G_d^\frac{n-2m}dq^{2m},\]
where $\mathcal G_d=(-1)^{d-1}\mathcal G^d$ (\cite[Theorem 5.15]{lidl1997}) and
\[\epsilon_d=(-1)^{(\frac nd-1)\big(\frac{q^d-1}2\frac md+1\big)}\eta\big(\mathrm N\big(2\delta c^{q^m+1}\big)\big)=(-1)^{(\frac nd-1)(\frac{q-1}2m+1)}\eta(\mathrm N(2\delta)),\]
as a consequence of \ref{e_4} in Theorem \ref{e}. Altogether, it gives
\[\begin{split}&\mathrel{\phantom{=}}\epsilon\mathcal G^{n-2m}q^{2m}\\&=(-1)^{(\frac nd-1)(\frac{q-1}2m+1)}\eta(\mathrm N(2\delta))((-1)^{d-1}\mathcal G^d)^\frac{n-2m}dq^{2m}\\&=(-1)^{n-1+\frac{q-1}2(\frac nd-1)m}\eta(\mathrm N(2\delta))\mathcal G^{n-2m}q^{2m},\end{split}\]
which completes the proof.
\end{proof}

So far, we have been studying Artin-Schreier curves of the form $y^q-y-x\ell(x)$ from quadratic forms of $\mathbb F_{q^n}/\mathbb F_q$. One may be interested in curves defined by $y^{q^k}-y-x\ell(x)$ for some positive divisor $k$ of $n$, even if $\mathrm{Tr}_k(x\ell(x))$ is not a quadratic form of $\mathbb F_{q^n}/\mathbb F_{q^k}$. Here is an example.

\begin{corollary}
Continue with the notation and assumptions in the last proposition. For a positive divisor $k$ of $m-l$, the number of rational points of the curve over $\mathbb F_{q^n}$ defined by $y^{q^k}-y-x\ell(x)$ is
\[\begin{cases}q^n+1-(-1)^\frac{(q-1)n}4\eta(\mathrm N(\delta))(q^k-1)q^{\frac n2+m}&\text{if }\frac nk\text{ is even,}\\q^n+1&\text{if }\frac nk\text{ is odd.}\end{cases}\]
It is a maximal or minimal curve if $\frac nk$ is even.
\end{corollary}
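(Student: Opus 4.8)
The plan is to count the affine points of $y^{q^k}-y=x\ell(x)$ and add the single rational point at infinity (the pole order $1+q^m$ of $x\ell(x)$ exceeds $q^k$ since $k\le m-l<m$, so there is exactly one place above infinity and it is rational). As $k\mid n$, the map $y\mapsto y^{q^k}-y$ is $\mathbb F_{q^k}$-linear on $\mathbb F_{q^n}$ with kernel $\mathbb F_{q^k}$ of size $q^k$; its image lies in $\{z:\mathrm{Tr}_k(z)=0\}$ and has the same cardinality $q^{n-k}$, hence equals it. Thus for fixed $x_0$ the equation $y^{q^k}-y=x_0\ell(x_0)$ has $q^k$ solutions when $\mathrm{Tr}_k(x_0\ell(x_0))=0$ and none otherwise, so the number of affine points is $q^k\cdot\#\{x\in\mathbb F_{q^n}:\mathrm{Tr}_k(x\ell(x))=0\}$. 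Applying the orthogonality relations over $\mathbb F_{q^k}$, and using that the canonical character of $\mathbb F_{q^k}$ composed with $\mathrm{Tr}_k$ is $\chi\circ\mathrm{Tr}$, this count becomes $q^{n-k}+q^{-k}\sum_{u\in\mathbb F_{q^k}^*}S(u)$ with $S(u)=\sum_{w\in\mathbb F_{q^n}}\chi(\mathrm{Tr}(u\cdot w\ell(w)))$.

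The conceptual crux is that every $S(u)$ is the character sum of a quadratic form already handled by the last proposition. Indeed $\mathrm{Tr}(u\cdot w\ell(w))=\mathrm{Tr}(w\cdot u\ell(w))$, and $u\ell(x)=\gamma^{q^m+1}(u\delta)x^{q^m}-\gamma^{q^l+1}(u\delta)x^{q^l}$ is of exactly the form treated there, with $\delta$ replaced by $u\delta$. Since $u\in\mathbb F_{q^k}^*\subseteq\mathbb F_{q^{m-l}}^*$ we still have $u\delta\in\mathbb F_{q^{m-l}}^*$, and the remaining hypotheses (on $v_2(l)$, the divisibility of $n$ by $m+l$ and $m-l$, and $\gcd(n/e,q)\ne1$) do not involve $\delta$; so the proposition applies unchanged. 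It yields $\dim_{\mathbb F_q}\ker\frac{u\ell+(u\ell)^\prime}2=2m$, whence the rank is $n-2m$ and $S(u)=\epsilon_u\mathcal G^{n-2m}q^{2m}$ with $\epsilon_u=(-1)^{n-1+\frac{q-1}2(\frac nd-1)m}\eta(\mathrm N(2u\delta))=\epsilon_0\,\eta(\mathrm N(u))$, where $\epsilon_0$ denotes the invariant of the last proposition. For $u\in\mathbb F_{q^k}^*$ one has $\mathrm N(u)=\mathrm N_{\mathbb F_{q^k}/\mathbb F_q}(u)^{n/k}$, so $\eta(\mathrm N(u))=\eta(\mathrm N_{\mathbb F_{q^k}/\mathbb F_q}(u))^{n/k}$.

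Summing then reduces everything to $\sum_{u\in\mathbb F_{q^k}^*}\eta(\mathrm N_{\mathbb F_{q^k}/\mathbb F_q}(u))^{n/k}$. Since $\mathrm N_{\mathbb F_{q^k}/\mathbb F_q}$ maps $\mathbb F_{q^k}^*$ onto $\mathbb F_q^*$ with fibers of equal size $\frac{q^k-1}{q-1}$, this equals $\frac{q^k-1}{q-1}\sum_{t\in\mathbb F_q^*}\eta(t)^{n/k}$, which is $q^k-1$ when $n/k$ is even and $0$ when $n/k$ is odd. In the odd case the affine count is $q^n$, giving $N=q^n+1$. In the even case the affine count is $q^n+\epsilon_0\mathcal G^{n-2m}q^{2m}(q^k-1)$, and substituting $\mathcal G^{n-2m}=(-1)^{(q-1)(n-2m)/4}q^{(n-2m)/2}$ (valid as $n-2m$ is even) gives $N=q^n+1+\epsilon_0(-1)^{(q-1)(n-2m)/4}(q^k-1)q^{n/2+m}$.

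The main obstacle to a clean writeup is the sign bookkeeping, namely proving $\epsilon_0(-1)^{(q-1)(n-2m)/4}=-(-1)^{(q-1)n/4}\eta(\mathrm N(\delta))$. Writing $\eta(\mathrm N(2\delta))=\eta(2)^n\eta(\mathrm N(\delta))$ and collecting the powers of $-1$, the claim reduces to $(-1)^{n+\frac{q-1}2\frac{nm}d}\eta(2)^n=1$; this holds because $n$ is even, so $(-1)^n=1$ and $\eta(2)^n=1$, while $\frac{q-1}2\frac{nm}d$ is even since $d\mid m$ and $n$ is even. This produces the stated point count. Finally, the $q^k$-analogue of the genus formula (the pole order $1+q^m$ being prime to the characteristic) gives $g=\frac{(q^k-1)q^m}2$, so $2g\,q^{n/2}=(q^k-1)q^{n/2+m}$ equals the absolute value of the deviation of $N$ from $q^n+1$ when $n/k$ is even; the Hasse-Weil bound is therefore attained, and the curve is maximal or minimal according to the sign $-(-1)^{(q-1)n/4}\eta(\mathrm N(\delta))$.
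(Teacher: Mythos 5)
Your proposal is correct and takes essentially the same route as the paper: both count the affine points via the double character sum $\sum_{w\in\mathbb F_{q^n}}\sum_{u\in\mathbb F_{q^k}}\chi(\mathrm{Tr}(uw\ell(w)))$, apply the preceding proposition with $\delta$ replaced by $u\delta$ (valid precisely because $k\mid m-l$ puts $u\delta\in\mathbb F_{q^{m-l}}^*$ while the other hypotheses do not involve $\delta$), and evaluate $\sum_{u\in\mathbb F_{q^k}^*}\eta(\mathrm N(2u\delta))$ through the surjectivity of the norm $\mathbb F_{q^k}^*\to\mathbb F_q^*$, splitting according to the parity of $n/k$. The only difference is one of explicitness: you carry out the genus computation and the sign bookkeeping (reducing to $(-1)^{n+\frac{q-1}2\frac{nm}d}\eta(2)^n=1$, which holds as $n$ is even in the relevant case) that the paper compresses into ``easily checked'' and ``a simple calculation''.
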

\begin{proof}
As is easily checked, the curve has genus $\frac{(q^k-1)q^m}2$, with exactly one point at infinity. The number of its other rational points over $\mathbb F_{q^n}$ is
\[\begin{split}&\mathrel{\phantom{=}}\sum_{w\in\mathbb F_{q^n}}\sum_{u\in\mathbb F_{q^k}}\chi_k(u\mathrm{Tr}_k(w\ell(w)))\\&=\sum_{u\in\mathbb F_{q^k}}\sum_{w\in\mathbb F_{q^n}}\chi(\mathrm{Tr}(uw\ell(w)))\\&=q^n+\sum_{u\in\mathbb F_{q^k}^*}(-1)^{n-1+\frac{q-1}2(\frac nd-1)m}\eta(\mathrm N(2u\delta))\mathcal G^{n-2m}q^{2m},\end{split}\]
where $\mathcal G^{n-2m}=(-1)^\frac{(q-1)(n-2m)}4q^{\frac n2-m}$ when $n$ is even, and
\[\sum_{u\in\mathbb F_{q^k}^*}\eta(\mathrm N(2u))=\begin{cases}q^k-1&\text{if }\frac nk\text{ is even,}\\0&\text{if }\frac nk\text{ is odd.}\end{cases}\]
A simple calculation yields the result.
\end{proof}

As a final remark, we introduce an alternative approach towards maximal and minimal Artin-Schreier curves.

\begin{proposition}
Let $L$ be a $q$-linear polynomial over $\mathbb F_{q^n}$ of degree less than $q^n$. Then $L=L^\prime$ with $\dim_{\mathbb F_q}\im L=r$ for some positive integer $r$ if and only if
\[L(x)=\sum_{i=1}^rd_i\alpha_i\mathrm{Tr}(\alpha_ix)\]
for some $d_i\in\mathbb F_q^*$ and $\alpha_1,\dots,\alpha_r$ that are linearly independent in $\mathbb F_{q^n}/\mathbb F_q$. In this case, $\epsilon=\eta(d_1\cdots d_r)$ for $L=\frac{\ell+\ell^\prime}2$.
\end{proposition}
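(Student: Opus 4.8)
The plan is to exploit the correspondence between self-adjoint $q$-linear maps and symmetric bilinear forms furnished by the trace pairing. Writing $B_L(x,y)=\mathrm{Tr}(xL(y))$, the identity $\mathrm{Tr}(\alpha L(\beta))=\mathrm{Tr}(L^\prime(\alpha)\beta)$ recorded before Theorem \ref{rank} shows that $B_L$ is symmetric precisely when $L=L^\prime$; moreover, since the trace form is nondegenerate, a $q$-linear map is determined by its associated bilinear form, so $L$ can be recovered from $B_L$. I shall also use repeatedly that every $\mathbb F_q$-linear functional on $\mathbb F_{q^n}$ equals $x\mapsto\mathrm{Tr}(\alpha x)$ for a unique $\alpha$, and that $\alpha\mapsto\mathrm{Tr}(\alpha\,\cdot)$ is an $\mathbb F_q$-isomorphism, so that linear independence of the $\alpha_i$ is equivalent to linear independence of the functionals $\mathrm{Tr}(\alpha_i\,\cdot)$.

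For the \emph{if} direction, I would first compute the bilinear form of $\tilde L(x)=\sum_{i=1}^rd_i\alpha_i\mathrm{Tr}(\alpha_ix)$. Pulling the scalar $\mathrm{Tr}(\alpha_iy)\in\mathbb F_q$ out of the trace gives $B_{\tilde L}(x,y)=\sum_{i=1}^rd_i\mathrm{Tr}(\alpha_ix)\mathrm{Tr}(\alpha_iy)$, which is symmetric; hence $\tilde L=\tilde L^\prime$. For the image, the functionals $\mathrm{Tr}(\alpha_i\,\cdot)$ are linearly independent, so $x\mapsto(\mathrm{Tr}(\alpha_1x),\dots,\mathrm{Tr}(\alpha_rx))$ is onto $\mathbb F_q^r$; since the $d_i$ are nonzero and the $\alpha_i$ independent, $\tilde L$ maps onto the span of $\alpha_1,\dots,\alpha_r$, giving $\dim_{\mathbb F_q}\im\tilde L=r$.

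For the \emph{only if} direction, assume $L=L^\prime$ with $\dim_{\mathbb F_q}\im L=r$. Then $B_L$ is symmetric, and its radical is $\{x:\mathrm{Tr}(xL(y))=0\text{ for all }y\}=\ker L^\prime=\ker L$, of dimension $n-r$ by rank-nullity, so $B_L$ has rank $r$. Diagonalizing $B_L$ (cf. \cite[Theorem 6.21]{lidl1997}) yields a basis whose dual functionals $\phi_1,\dots,\phi_n$ satisfy $B_L(x,y)=\sum_{i=1}^rd_i\phi_i(x)\phi_i(y)$ with $d_1,\dots,d_r\in\mathbb F_q^*$ and $\phi_1,\dots,\phi_r$ linearly independent. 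Writing $\phi_i(x)=\mathrm{Tr}(\alpha_ix)$ with $\alpha_1,\dots,\alpha_r$ linearly independent, the map $\tilde L(x)=\sum_{i=1}^rd_i\alpha_i\mathrm{Tr}(\alpha_ix)$ satisfies $B_{\tilde L}=B_L$ by the computation above; nondegeneracy of the trace form then forces $L=\tilde L$, which is the desired expression.

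Finally, for $L=\frac{\ell+\ell^\prime}2$, the polarization of $Q(x)=\mathrm{Tr}(x\ell(x))$ is exactly $B_L$, since $\frac12(Q(x+y)-Q(x)-Q(y))=\frac12\mathrm{Tr}(x\ell(y)+y\ell(x))=\mathrm{Tr}(xL(y))$ after rewriting $\mathrm{Tr}(y\ell(x))=\mathrm{Tr}(x\ell^\prime(y))$. Hence $Q(x)=B_L(x,x)=\sum_{i=1}^rd_i\mathrm{Tr}(\alpha_ix)^2$. Extending $\alpha_1,\dots,\alpha_r$ to a basis, the coordinates $u_i=\mathrm{Tr}(\alpha_ix)$ range over all of $\mathbb F_q^n$ as $x$ varies, so in these coordinates $Q$ is the diagonal form $d_1u_1^2+\dots+d_ru_r^2$; by the definition of $\epsilon$ accompanying \eqref{eq13} this gives $\epsilon=\eta(d_1\cdots d_r)$. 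The main obstacle is the only-if direction: one must ensure that the diagonalization produces \emph{linearly independent} functionals $\phi_i$ with nonzero coefficients, and that the recovered $\tilde L$ carries the \emph{same} $\alpha_i$ both in the output slot and inside the functional, which is guaranteed by matching the full bilinear forms rather than merely the quadratic forms.
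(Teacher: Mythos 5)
Your proof is correct, but the necessity direction is organized differently from the paper's. The paper works inside the image: it picks a basis $\beta_1,\dots,\beta_r$ of $\im L$, writes $L(x)=\sum_i\beta_i\mathrm{Tr}(\alpha_ix)$, and then uses $L=L^\prime$ twice in a concrete way --- first to show via $\im L=(\ker L^\prime)^\perp=(\ker L)^\perp$ that the $\alpha_i$ themselves form a basis of $\im L$, so that $(\beta_1,\dots,\beta_r)=(\alpha_1,\dots,\alpha_r)C$ for a nonsingular $r\times r$ matrix $C$ over $\mathbb F_q$, and second, by evaluating $L=L^\prime$ at dual elements $x_j$ with $\mathrm{Tr}(\alpha_ix_j)=\delta_{ij}$, to deduce $C=C^\mathrm T$; it then diagonalizes $C=P^\mathrm TDP$ and absorbs $P^\mathrm T$ into the tuple of $\alpha_i$'s. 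You instead diagonalize the full $n$-dimensional symmetric bilinear form $B_L(x,y)=\mathrm{Tr}(xL(y))$ (its radical being $\ker L$, rank-nullity gives rank $r$), read off $B_L(x,y)=\sum_{i=1}^rd_i\mathrm{Tr}(\alpha_ix)\mathrm{Tr}(\alpha_iy)$ from the coordinate functionals, and recover $L=\tilde L$ by nondegeneracy of the trace pairing --- the self-adjoint-operator/symmetric-form dictionary made explicit. Both arguments ultimately rest on the same congruence diagonalization \cite[Theorem 6.21]{lidl1997} and on trace duality, and your $\epsilon$ computation via $\mathrm{Tr}(x\ell(x))=\sum_id_i\mathrm{Tr}(\alpha_ix)^2$ matches the paper's. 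What your route buys is brevity and the avoidance of the dual-element evaluation trick and of the intermediate lemma that the $\alpha_i$ span $\im L$ (in your version this fact emerges only a posteriori, from $L=\tilde L$); what the paper's route buys is that it keeps all matrix work in the $r$-dimensional image and exhibits from the outset that the representing elements $\alpha_1,\dots,\alpha_r$ can be taken to be a basis of $\im L$, which is the structural point behind the symmetry of $C$. Your closing concern --- that one must match bilinear forms rather than merely quadratic forms to force the same $\alpha_i$ in both slots --- is well placed and is exactly what your nondegeneracy step handles.
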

\begin{proof}
The sufficiency is obvious. If $L$ has the above form, then
\[\mathrm{Tr}(x\ell(x))=\mathrm{Tr}(xL(x))\equiv\sum_{i=1}^rd_i\mathrm{Tr}(\alpha_ix)^2\pmod{x^{q^n}-x},\]
where $(\mathrm{Tr}(\alpha_1x),\dots,\mathrm{Tr}(\alpha_rx))$ maps $\mathbb F_{q^n}$ onto $\mathbb F_q^r$, and thus $\epsilon=\eta(d_1\cdots d_r)$ by definition.

Conversely, suppose that $L=L^\prime$ and $\dim_{\mathbb F_q}\im L=r$. Let $\beta_1,\dots,\beta_r$ be a basis of $\im L$ over $\mathbb F_q$. Then there exist $\alpha_1,\dots,\alpha_r\in\mathbb F_{q^n}$ such that
\[L(x)=\beta_1\mathrm{Tr}(\alpha_1x)+\dots+\beta_r\mathrm{Tr}(\alpha_rx),\]
for every linear form of $\mathbb F_{q^n}/\mathbb F_q$ is $\mathrm{Tr}(\alpha x)$ for some $\alpha\in\mathbb F_{q^n}$. Let $W$ be the subspace of $\mathbb F_{q^n}/\mathbb F_q$ spanned by $\alpha_1,\dots,\alpha_r$. Clearly, $\ker L=W^\perp$ and then $\dim_{\mathbb F_q}W=r$, so $\alpha_1,\dots,\alpha_r$ is a basis of $(\ker L)^\perp$ over $\mathbb F_q$, where $(\ker L)^\perp=(\ker L^\prime)^\perp=\im L$. Hence, there exists a nonsingular matrix $C$ over $\mathbb F_q$ such that
\[\begin{pmatrix}\beta_1&\cdots&\beta_r\end{pmatrix}=\begin{pmatrix}\alpha_1&\cdots&\alpha_r\end{pmatrix}C.\]
Then
\[L(x)=\begin{pmatrix}\alpha_1&\cdots&\alpha_r\end{pmatrix}C\begin{pmatrix}\mathrm{Tr}(\alpha_1x)\\\vdots\\\mathrm{Tr}(\alpha_rx)\end{pmatrix}.\]
Observe that
\[L^\prime(x)=\begin{pmatrix}\alpha_1&\cdots&\alpha_r\end{pmatrix}C^\mathrm T\begin{pmatrix}\mathrm{Tr}(\alpha_1x)\\\vdots\\\mathrm{Tr}(\alpha_rx)\end{pmatrix},\]
and there exist $x_1,\dots,x_r\in\mathbb F_{q^n}$ such that
\[\mathrm{Tr}(\alpha_ix_j)=\begin{cases}1&\text{if }i=j,\\0&\text{if }i\ne j.\end{cases}\]
Since $L(x_j)=L^\prime(x_j)$ for $j=1,\dots,n$, one has
\[\begin{pmatrix}\alpha_1&\cdots&\alpha_r\end{pmatrix}C=\begin{pmatrix}\alpha_1&\cdots&\alpha_r\end{pmatrix}C^\mathrm T.\]
This implies $C=C^\mathrm T$, and $C=P^\mathrm TDP$ for a nonsingular matrix $P$ over $\mathbb F_q$ and a diagonal matrix with nonzero entries $d_1,\dots,d_r\in\mathbb F_q$. As a result,
\[L(x)=\begin{pmatrix}\alpha_1&\cdots&\alpha_r\end{pmatrix}P^\mathrm TDP\begin{pmatrix}\mathrm{Tr}(\alpha_1x)\\\vdots\\\mathrm{Tr}(\alpha_rx)\end{pmatrix}.\]
Substituting $\begin{pmatrix}\alpha_1&\cdots&\alpha_r\end{pmatrix}$ for $\begin{pmatrix}\alpha_1&\cdots&\alpha_r\end{pmatrix}P^\mathrm T$, we get the desired basis of $\im L$.
\end{proof}

As we have seen, if $L=\frac{\ell+\ell^\prime}2$ for a $q$-linear polynomial $\ell$ over $\mathbb F_{q^n}$ of degree less than $q^n$, then $L=L^\prime$, and vice versa. Now, assume that $n$ is even. Except the trivial cases mentioned at the beginning of this section, every maximal or minimal curve over $\mathbb F_{q^n}$ defined by $y^q-y-x\ell(x)$ arises from
\[L(x)=\sum_{i=1}^rd_i\alpha_i\mathrm{Tr}(\alpha_ix)=\sum_{i=0}^{n-1}a_ix^{q^i},\]
where $r$ is even with $d_i\in\mathbb F_q^*$ and $\alpha_1,\dots,\alpha_r$ linearly independent in $\mathbb F_{q^n}/\mathbb F_q$, such that $a_{\frac{n-r}2+1},a_{\frac{n-r}2+2},\dots,a_\frac n2$ are all zero. Then we may take the part of degree $q^{\frac{n-r}2}$ in $L$. Whether it is maximal or minimal depends on $(-1)^\frac{(q-1)r}4\eta(d_1\cdots d_r)$. For example, let
\[L(x)=d_1\alpha_1\mathrm{Tr}(\alpha_1x)+d_2\alpha_2\mathrm{Tr}(\alpha_2x)=\sum_{i=0}^{n-1}\big(d_1\alpha_1^{q^i+1}+d_2\alpha_2^{q^i+1}\big)x^{q^i},\]
with $d_1,d_2\in\mathbb F_q^*$, $\alpha_1\alpha_2^{-1}\notin\mathbb F_q$ and $d_1\alpha_1^{q^\frac n2+1}+d_2\alpha_2^{q^\frac n2+1}=0$, and let
\[\ell(x)=2\sum_{i=0}^{\frac n2-1}\big(d_1\alpha_1^{q^i+1}+d_2\alpha_2^{q^i+1}\big)x^{q^i},\]
so that $L=\frac{\ell+\ell^\prime}2$. Here $\deg\ell=q^{\frac n2-1}$ since $L(x)^{q^{\frac n2-1}}$ is congruent to a polynomial of degree at most $q^{n-2}$ modulo $x^{q^n}-x$, while $\dim_{\mathbb F_q}\ker L=n-2$. That describes all the maximal and minimal Artin-Schreier curves from quadratic forms of $\mathbb F_{q^n}/\mathbb F_q$ of rank $2$ (cf. \cite[Theorem 5.5]{anbar2014quadratic} and \cite{bartoli2021explicit}).


\section*{Acknowledgement}

The author would like to express his sincere gratitude to Prof. Sihem Mesnager for her advice.

\bibliographystyle{abbrv}
\bibliography{references}

\end{document}